\newtheorem{theorem}{\bf Theorem}[section]
\newtheorem{lemma}[theorem]{\bf Lemma}
\newtheorem{proposition}[theorem]{\bf Proposition}
\newtheorem{problem}[theorem]{\bf Problem}
\newtheorem{conj}[theorem]{\bf Conjecture}
\newtheorem{remark}[theorem]{\bf Remark}
\newtheorem{defi}[theorem]{\bf Definition}
\newtheorem{result}[theorem]{\bf Result}
\newcommand{\cP}{\mathcal{P}}
\newcommand{\cL}{\mathcal{L}}
\newcommand{\cB}{\mathcal{B}}
\newcommand{\cD}{\mathcal{D}}
\newcommand{\cS}{\mathcal{S}}
\newcommand{\PD}{\cP_\cD}
\newcommand{\LD}{\cL_\cD}
\newcommand{\PG}{\mathrm{PG}}
\newcommand{\cut}[1]{}
\title{Dominating sets in projective planes}
\author{{\bf Tam\'as H\'eger}\\
{\small MTA--ELTE Geometric and Algebraic Combinatorics Research Group}\\
{\small H--1117 Budapest, P\'azm\'any P.\ s\'et\'any 1/C, Hungary}\\
{\small \tt{  hetamas@cs.elte.hu}} \vspace{0.5cm} \\
  {\bf Zolt\'an L\'or\'ant Nagy}\\ 
{\small MTA--ELTE Geometric and Algebraic Combinatorics Research Group}\\
{\small H--1117 Budapest, P\'azm\'any P.\ s\'et\'any 1/C, Hungary}\\
{\small \tt{  nagyzoli@cs.elte.hu}}}
\begin{document}
\maketitle

\begin{abstract} 
We describe small dominating sets of the incidence graphs of finite projective planes by establishing a stability result which shows that dominating sets are strongly related to blocking and covering sets. Our main result states that if a dominating set in a projective plane of order $q>81$ is smaller than $2q+2\lfloor \sqrt{q}\rfloor+2$ (i.e., twice the size of a Baer subplane), then it contains either all but possibly one points of a line or all but possibly one lines through a point. Furthermore, we completely characterize dominating sets of size at most $2q+\sqrt{q}+1$. In Desarguesian planes, we could rely on strong stability results on blocking sets to show that if a dominating set is sufficiently smaller than $3q$, then it consists of the union of a blocking set and a covering set apart from a few points and lines. 

  \bigskip\noindent \textbf{Keywords:} projective plane, domination, dominating set, blocking set, stability
\end{abstract}

\section{Introduction}

Since the pioneering work of Berge, Cockayne, Hedetniemi, Ore, Vizing and others concerning domination in graphs (see  for instance \cite{survey1}), the study of domination-related functions became a wide branch in graph theory. Given a graph $G=(V, E)$, a subset $\cD\subseteq V$ is a \emph{dominating set} of $G$ if its closed neighborhood $\cup_{v\in \cD}N[v] = V$. The domination number is the number of vertices in a smallest dominating set of $G$.

Let $\Pi_q$ be an arbitrary finite projective plane of order $q$. The purpose of this paper is to characterize small dominating sets of the incidence graph of $\Pi_q$. Generally, we do not assume that the plane is built on a finite field $\mathbb{F}_q$; if we do, we denote the plane by $\PG(2,q)$. For an introduction to finite projective and affine planes, see \cite{Beu}.

We consider finite planes as incidence structures and hence, unless causing confusion, we identify them with their incidence graphs and mix the graph theoretic and geometric terminology. Thus we refer to the vertices of this incidence graph as points and lines of the projective plane. Throughout the paper, $\cD=\PD\cup\LD$ denotes a dominating set in (the incidence graph of) a finite projective plane $\Pi_q$ where $\PD$ is a subset of the points and $\LD$ is a subset of the lines. 
For a point $P$, $[P]$ denotes the set of lines incident with $P$. Dually, if we want to emphasize the difference between a line $\ell$ and the set of points incident with $\ell$, we might write $[\ell]$ to indicate the latter.

The idea to systematically study dominating sets of (incidence graphs of) combinatorial designs appeared in the work of Goldberg et al.\ \cite{Goldberg}. Among others, they proved the accurate lower bound on the size of minimal dominating sets of projective planes of order $q$ and formulated a conjecture on the structure of dominating sets that attain this bound. Our Theorem \ref{main} verifies this conjecture and goes far beyond by establishing a stability result.

\begin{theorem}\cite{Goldberg} \label{gold}  
The domination number of the incidence graph of an arbitrary projective plane of order $q$ is $2q$.
\end{theorem}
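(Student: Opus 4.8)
The plan is to establish the value $2q$ by proving a matching lower and upper bound, reading the incidence-graph condition geometrically: a point $P$ is dominated exactly when $P\in\PD$ or some line through $P$ lies in $\LD$, and dually a line $\ell$ is dominated exactly when $\ell\in\LD$ or $\ell$ carries a point of $\PD$. Equivalently, $\LD$ must cover every point outside $\PD$, while $\PD$ must block every line outside $\LD$.

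For the upper bound I would exhibit a dominating set of size exactly $2q$. Fix a point $P$ and a line $m$ through $P$, let $\LD$ consist of the $q$ lines through $P$ other than $m$, and let $\PD$ consist of the $q$ points of $m$ other than $P$. Every point off $m$ lies on a line of $\LD$, every point of $m\setminus\{P\}$ lies in $\PD$, and $P$ itself lies on the lines of $\LD$; hence all points are dominated. Dually, a line not through $P$ meets $m$ in a point of $\PD$, a line through $P$ other than $m$ already lies in $\LD$, and $m$ carries the points of $\PD$; hence all lines are dominated. This produces a dominating set of size $2q$.

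For the lower bound, write $p=|\PD|$, $l=|\LD|$, and $N=q^2+q+1$. The crucial ingredient is the elementary covering estimate that any $l$ lines cover at most $lq+1$ points: adding the lines one at a time, the first covers $q+1$ points and each further line meets all previously chosen ones, hence contributes at most $q$ new points, with equality for a pencil. Since every point outside $\PD$ must be covered by $\LD$, this yields $p\ge N-(lq+1)=q(q+1-l)$ whenever $l\le q$, and therefore $p+l\ge q^2+q-l(q-1)\ge 2q$, the final inequality using $l\le q$. By the dual estimate (blocking every line outside $\LD$ with points of $\PD$) the symmetric bound $l\ge q(q+1-p)$ holds when $p\le q$, again giving $p+l\ge 2q$; and if both $p>q$ and $l>q$ then trivially $p+l\ge 2q+2$. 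Combining the cases gives $p+l\ge 2q$ in all situations.

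The main obstacle is the lower bound. A naive double count of the pairs consisting of a point outside $\PD$ together with an incident line of $\LD$ only delivers the far weaker $N-p\le l(q+1)$, which is useless because it ignores the forced pairwise intersections among the covering lines. The real work is to replace it by the tight covering estimate $lq+1$ and then to organize the case distinction on the sizes of $\PD$ and $\LD$ so that the extremal regime ($l=q$ with the covering lines concurrent) is exactly realized by the construction above.
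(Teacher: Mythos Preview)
Your argument is correct and is essentially the paper's own approach. The covering estimate ``$l$ lines cover at most $lq+1$ points'' is precisely the dual of Proposition~\ref{bltrivi}, and the inequality $|\cD|\ge q^2+q-(q-1)|\LD|$ you derive from it is exactly Proposition~\ref{minqq}; the paper remarks just after that proposition that Theorem~\ref{gold} follows immediately, and your explicit case split on whether $|\LD|\le q$, $|\PD|\le q$, or both exceed $q$ simply spells this out. The construction for the upper bound is likewise the one appearing in Theorem~\ref{main}\,(i).
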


In case of finite projective planes, the similar concept of blocking sets has been widely investigated since the late 60's. 

\begin{defi}
A set of points is a \emph{blocking set} in $\Pi_q$ if every line of $\Pi_q$ intersects the set in at least one point. A set of lines is a \emph{covering set} in $\Pi_q$ if every point of $\Pi_q$ lies on at least one line of the set.
\end{defi}

It is clear that blocking sets and covering sets are dual concepts. The smallest examples of blocking sets in $\Pi_q$ are lines; they have $q+1$ points. Blocking sets containing a full line are called \emph{trivial}. The trivial examples for covering sets are full pencils, i.e., the set of $q+1$ lines incident with an arbitrary point. Other important examples of blocking sets are Baer subplanes, which exist only if $q$ is a square, and they have $q+\sqrt{q}+1$ points. Note that we can similarly define blocking sets for affine planes. We provide more information on projective and affine blocking sets in Section 2.

The union of a blocking set and a covering set of $\Pi_q$ clearly provides an example for a dominating set in $\Pi_q$. In fact, to dominate a line $\ell$, a dominating set $\cD$ must contain either one of the points of $\ell$ or $\ell$ itself, which means that $\PD$ dominates (blocks) all lines not in $\LD$, and $\LD$ dominates (covers) all points not in $\PD$. Hence, in some sense, a small dominating set has to be close to the union of a blocking set and a covering set. Our main Theorems \ref{main} and \ref{fo2} can be considered as formal versions of this statement.

A dominating (blocking) set is \emph{minimal} if it does not contain a smaller dominating (blocking) set. Since dominating sets are closed under addition of further vertices, we only consider minimal ones. A minimal dominating set $\cD$ might contain more than one point whose removal from $\cD$ provides a set which dominates every line and point except the removed point itself. Thus although $\cD$ was minimal, removing two points from it and adding the line incident with both points would result in a smaller dominating set. This phenomenon motivates the following concept.

\begin{defi}
We call a dominating set $\cD=\PD\cup\LD$ of $\Pi_q$ \emph{stable}, if it is minimal and there is no dominating set $\cD'\supset\cD$ such that $|\cD'|=|\cD|+1$ and $\cD'$ contains a dominating set of size less than $|\cD|$.
\end{defi}

From every minimal dominating set one may get a stable one after some steps of exchanging some incident points and lines and some deletion. Essentially we claim that small stable dominating sets are the union of a blocking set and a covering set after the deletion or addition of at most two vertices.

It is convenient to define a class of dominating sets in projective planes that contain or almost contain a trivial blocking or covering set.

\begin{defi}
We call a dominating set a \emph{primal dominating set} if it contains $q$ concurrent lines or $q$ collinear points (possibly a full pencil or a full line).
\end{defi}

\begin{theorem}\label{main} 
Let $\cD$ be a minimal dominating set in the incidence graph of an arbitrary projective plane $\Pi_q$, $q\geq 5$. If $|\cD|\leq 2q+\sqrt{q}+1$, then $\cD$ is stable. Furthermore, one of the following holds:
\begin{itemize}
\item{$\cD$ is primal, and
\begin{enumerate}
\item[(i)] $|\cD|=2q$, and  $\cD$ is the union of $q$ collinear points on a line $\ell$ and $q$ concurrent lines through a point $P$ such that $P\in\ell$, $P,\ell\notin\cD$,
\item[(ii)]  $|\cD|=2q+2$, and  $\cD$ is the union of all $q+1$  points of a line $\ell$ and $q+1$ concurrent lines through a point $P$ such that $P\not\in\ell$,
\item[(iii)]  $|\cD|=2q+\sqrt{q}+1$, and either (a) there is a Baer subplane $\Pi'$ and a point $P\in\Pi'$ such that $\LD=[P]$ and $\PD=\Pi'\setminus\{P\}$, or (b) $\cD$ is the dual of the structure described in (a),
\item[(iv)]  $3q-1> |\cD| > 2q+\sqrt{q}+1$; moreover, if $\cD$ is stable, then (a) $\LD$ is a full pencil and $\PD$ consists of all but possibly one point of a nontrivial minimal blocking set of $\Pi_q$, or (b) $\LD$ is a full pencil plus a line $\ell$ and $\PD$ consists of all but possibly one points of a minimal affine blocking set of $\Pi_q\setminus \ell$, or (c) $\cD$ is the dual of the structure described in (a) or (b),
\item[(v)] $|\cD|\geq 3q-1$.
\end{enumerate}}
\item{$\cD$ is not primal, and
\begin{enumerate}
\item[(vi)]  $|\cD|\geq 2q+2\left\lfloor \sqrt{q}\right\rfloor+2$, provided that $q>81$.
\end{enumerate}}
\end{itemize}
\end{theorem}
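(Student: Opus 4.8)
The plan is to isolate the two elementary domination constraints, reduce by duality, split into the primal and non-primal cases, and treat the threshold statement (vi) as the core. First I would record the two facts that drive everything: since a line $\ell\notin\LD$ is dominated only if it carries a point of $\PD$, \emph{every external line of $\PD$ lies in $\LD$}; dually, \emph{every point not covered by $\LD$ lies in $\PD$}. Writing $p=|\PD|$, $b=|\LD|$, this says that the number $e$ of external lines of $\PD$ satisfies $e\le b$ and the number $f$ of uncovered points satisfies $f\le p$. I would also keep the a priori bound $p+b\ge 2q$ (Theorem \ref{gold}) and the self-dual double count
\[(p+b)(q+2)\ge 2(q^2+q+1)+2I,\qquad I:=\sum_{\ell\in\LD}\bigl|[\ell]\cap\PD\bigr|,\]
obtained by summing $\sum_{\ell\notin\LD}|[\ell]\cap\PD|\ge q^2+q+1-b$ with its dual. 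Since all of this is invariant under the point--line duality of $\Pi_q$, in the primal case I may assume $\PD$ contains $q$ collinear points, and in the non-primal case I may symmetrise freely.

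For the non-primal threshold (vi), which is the heart of the theorem, the strategy is to show that $\PD$ is forced to be \emph{almost} a blocking set and $\LD$ \emph{almost} a covering set, and that non-primality keeps both large. Adding one point on each external line turns $\PD$ into a blocking set of size at most $p+e$, and adding one line through each uncovered point turns $\LD$ into a covering set of size at most $b+f$; non-primality lets one check that neither completion is trivial, so Bruen's bound gives $p+e\ge q+\fsq+1$ and $b+f\ge q+\fsq+1$. The difficulty is that $e\le b$ and $f\le p$ leak an entire Baer subplane's worth of slack, so these two inequalities alone only reproduce $p+b\gtrsim q+\fsq+1$. To recover the factor two I would feed in the displayed incidence inequality: a computation shows that in the extremal configuration ($\PD$ a Baer subplane, $\LD$ the dual Baer subplane) the count $I$ is exactly large enough to push the right-hand side up to $2q+2\fsq+2$, so the real content is a \emph{lower} bound on $I$ for non-primal $\cD$. \textbf{This is the main obstacle.} I expect to dichotomise on the size of $I$ (equivalently, on how many points of $\PD$ are uncovered and how concentrated the external lines are): when $I$ is small the external lines and uncovered points should be so numerous and so structured that $\cD$ is already primal, contradicting the hypothesis. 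The condition $q>81$ ought to enter precisely here, to force the error terms in this counting below $\fsq$.

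For the primal classification (i)--(v) I would assume $\PD$ contains $q$ points of a line $\ell$ and analyse what the remaining budget must do. The at most two points of $\ell$ outside $\PD$ each carry $q$ further lines meeting $\ell$ only there, hence neither blocked by $\ell$ nor dominated unless they lie in $\LD$ or meet $\PD\setminus\ell$; dually the $q^2$ affine points off $\ell$ must all be covered by $\LD$. This forces $\LD$ to be essentially a pencil at a point $P\in\ell$ (possibly together with $\ell$ itself) and $\PD\setminus\ell$, together with $P$, to behave like a projective or affine blocking set of the residual structure. Reading off the minimal sizes of these blocking configurations then yields the listed cases: the empty residual blocking set (case (i), $|\cD|=2q$), a full line plus a full pencil (case (ii), $2q+2$), a Baer subplane minus a point via Bruen's equality characterisation (case (iii), $2q+\fsq+1$), and a nontrivial minimal projective or affine blocking set (case (iv)), with (v) collecting everything of size at least $3q-1$.

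Finally, for the stability assertion I would verify the exchange property directly on the configurations of size at most $2q+\fsq+1$: if two points of $\PD$ could be deleted and replaced by their joining line to give a smaller dominating set, the resulting incidence pattern would contradict the blocking/covering structure just established — for instance, in case (i) no two points of $\PD$ are simultaneously removable without leaving a line undominated. I expect this step to be routine once the structure of $\cD$ has been pinned down, so that the substantive work remains the non-primal counting described above.
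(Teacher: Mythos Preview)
Your handling of the primal case is broadly right in spirit, though a little loose (for instance $P$ need not lie on $\ell$; case (ii) has $P\notin\ell$, and in (iv)(b) the pencil is augmented by an extra line). The paper proceeds via two clean lemmas: if $|[P]\cap\LD|=q$ one gets exactly configuration (i), and if $|[P]\cap\LD|=q+1$ then $\PD\cup\{P\}$ is a projective or affine blocking set with only $P$ possibly inessential; then Bruen's and the affine bound finish. Stability is handled by assuming a smallest minimal non-stable primal $\cD$, passing to the stable $\cD'$ obtained after one addition and deletions, and showing $\cD'$ must still be primal, which forces $|\cD|\ge 3q-1$.

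The genuine gap is in the non-primal threshold (vi). Your proposed route --- complete $\PD$ to a blocking set, $\LD$ to a covering set, invoke Bruen twice, and then rescue the lost factor via a lower bound on $I=\sum_{\ell\in\LD}|[\ell]\cap\PD|$ --- does not work. First, non-primality of $\cD$ does \emph{not} guarantee the completions are nontrivial: if $\PD$ has $q-1$ points on a line and the two missing points are on external lines, adding them gives a full line. Second, and more seriously, the inequality $(p+b)(q+2)\ge 2(q^2+q+1)+2I$ yields $p+b\ge 2q+2\fsq+2$ only if $I\ge q\sqrt{q}+2q+2\sqrt{q}+1$, which is the \emph{maximum} value $I$ can take when $\PD$ is a Baer subplane (every line meets it in at most $\sqrt{q}+1$ points). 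There is no mechanism forcing $I$ anywhere near this; in fact $I$ can be zero for suitable $\PD,\LD$. Your ``dichotomise on the size of $I$'' is not a plan but a restatement of the difficulty.

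The paper's argument is structurally different and uses two ingredients you do not have. The first is a secant-length bound: if $\cD$ is non-primal and $|\cD|+|\PD|\le 4q-3$, then the longest secant $k$ of $\PD$ satisfies $k\le |\PD|-q+1$; this comes from the elementary cover estimate that $|\LD|$ lines with at most $c$ through any point cover at most $c^2-(|\LD|+1)c+|\LD|(q+1)+1$ points. The second is the weight function
\[
w(\PD)=\sum_{m_i>0}(m_i-1)(m_i-k)\le 0,
\]
which, after expanding via the standard equations $\sum m_i=|\PD|(q+1)$ and $\sum m_i(m_i-1)=|\PD|(|\PD|-1)$, gives
\[
0\ge |\PD|^2-(kq+1)|\PD|+k\bigl(q^2+q+1-|\cD|\bigr).
\]
Plugging in $|\PD|\le q+2\sqrt{q}+1-k$ forces $k\ge\sqrt{q}$, and the border cases $k=\fsq$, $k=\fsq+1$ are then killed by a finer analysis of the secant distribution (this is where $q>81$ enters). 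Combining with $k\le|\PD|-q+1$ gives $|\PD|\ge q+\fsq+1$, and duality doubles it. None of this is captured by your incidence count on $I$.
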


\begin{remark}
Note that some lower bound on $q$ is essential. For example, if one considers the point set of an oval (if $q$ is odd) or hyperoval (if $q$ is even) and the skew lines to this point set, the obtained structure is clearly a non-primal dominating set of size $q+1+q(q-1)/2$ or $q+2+q(q-1)/2$, which can be smaller than $2(q+\sqrt{q}+1)$ for small values of $q$. Also, if $P,Q\in\ell$, then $([\ell]\setminus\{P,Q\})\cup(([P]\cup[Q])\setminus\{\ell\})$ is a non-stable dominating set of size $3q-1$, which is not greater than $2q+\sqrt{q}+1$ if $q\leq 4$.
\end{remark}

It is easy to see that the structures described in Theorem \ref{main} (i)-(iv) are dominating sets. They are very close to the union of a blocking set and a covering set at least one of which is trivial. Indeed, let $\cB$ be a blocking set of $\Pi_q$, and let $P$ be an arbitrary point. Then $\cD=(\cB\setminus\{P\})\cup[P]$ is clearly a dominating set of size $|\cB|+q$ or $|\cB|+q+1$, depending on whether $P\in\cB$ or $P\notin\cB$, respectively. However, $\cD$ may not be a minimal dominating set even if $\cB$ is a minimal blocking set; for example, if a line $\ell$ through $P$ contains $q$ points of $\cB$, then $\ell$ is not essential for $\cD$, so we may exclude it from $\cD$. Regarding Theorem \ref{main}: when $\cB$ is a line, we get (i) if $P\in\cB$ and (ii) if $P\notin\cB$; if $\cB$ is a Baer subplane, we get (iii) if $P\in\cB$, and we get a dominating set of size $2q+\sqrt{q}+2$ if $P\notin\cB$. Note that if we let $\cB$ be a blocking set of the affine plane $\Pi_q\setminus\ell$ for some line $\ell$ of $\Pi_q$ and $P\notin\ell$ is an arbitrary point, then $\cD=(\cB\setminus\{P\})\cup([P]\cup\{\ell\})$ is also a dominating set.

Theorem \ref{main} (vi) is sharp when $q$ is a square: the union of a Baer subplane and a dual Baer subplane meets the bound. We conjecture that this is the only case when Theorem \ref{main} (vi) is sharp; for more details, see Section \ref{fin}.

In $\PG(2,q)$, due to its algebraic manner, strong stability results on blocking sets could have been obtained, which equip us to prove much stronger correspondences between dominating, blocking and covering sets than for general projective planes.

\begin{theorem}\label{fo2}
Let $\cD$ be a (not necessarily stable) non-primal dominating set in $\PG(2,q)$, $q=p^h$, $p$ prime.
\begin{itemize}
\item If $|\cD|\leq \frac{5}{2}q-\frac{3}{2}$, then $\PD$ is a blocking set and $\LD$ is a covering set. 
\item If $|\cD|\leq \frac{8}{3}q-2$, then either $|\PD|\leq |\LD|$ and $\PD$ is a blocking set, or $|\LD|\leq |\PD|$ and $\LD$ is a covering set. 
\item If $h=1$, then $|\cD|> \frac{8}{3}q-2$.
\item If $|\cD|\leq 3q-6\sqrt{q}+3$, $h\geq2$ and $p\geq 200$, then either $|\PD|\leq|\LD|$ and $\PD$ can be extended to a blocking set by adding at most three points to it, or $|\LD|\leq|\PD|$ and $\LD$ can be extended to a covering set by adding at most three lines to it.
\item If $|\cD|\leq 3q-6\sqrt{q}+3$ and $h=1$, then either $|\PD|\leq |\LD|$ and $\PD$ contains $q-3$ collinear points, or $|\LD|\leq |\PD|$ and $\LD$ contains $q-3$ concurrent lines. (That is, the smaller of $\PD$ and $\LD$ can be extended to a trivial blocking or covering set by adding at most four points or lines.)
\end{itemize}
\end{theorem}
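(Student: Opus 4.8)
The plan is to exploit the fundamental correspondence noted before the statement: in a dominating set $\cD=\PD\cup\LD$ the point set $\PD$ blocks every line outside $\LD$ (a skew line to $\PD$ must itself lie in $\LD$ to be dominated), and dually $\LD$ covers every point outside $\PD$. Writing $b=|\PD|$ and $c=|\LD|$, this says that the number $\delta_P$ of $0$-secants (skew lines) of $\PD$ satisfies $\delta_P\le c$, while the number of points missed by $\LD$ is at most $b$. So $\PD$ is ``almost'' a blocking set and $\LD$ is ``almost'' a covering set, and the theorem becomes a quantitative question of how far a bounded-size almost-blocking set can be from a genuine blocking set. Since being non-primal is a self-dual condition (no $q$ collinear points and no $q$ concurrent lines), it suffices to argue for $\PD$ throughout and invoke duality for $\LD$.

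First I would establish the engine of the elementary bullets: a lower bound on $\delta_P$ for a non-primal, non-blocking $\PD$. Choosing a skew line $\ell$ and passing to the affine plane $\PG(2,q)\setminus\ell$, the set $\PD$ misses exactly $\delta_P-1$ affine lines; placing one point on each of these produces an affine blocking set, so the Brouwer--Schrijver/Jamison bound $2q-1$ yields $b+\delta_P\ge 2q$. This is too weak, and the crux is to sharpen it to a bound of the form $2b+\delta_P\ge 4q-2$, which is tight for the natural extremal families ($q-1$ collinear points together with a few points in general position; two near-lines; a minimal affine blocking set with one point deleted). Here the non-primality hypothesis is exactly what excludes the genuinely cheaper configuration of $q$ collinear points, and I expect this strengthened estimate --- proved by a stability/minimality refinement of the affine argument --- to be the main technical obstacle.

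Granting this lemma, the elementary bullets follow by optimization. If $\PD$ is not a blocking set then $c\ge\delta_P\ge 4q-2-2b$, i.e.\ $2b+c\ge 4q-2$. For the $\tfrac83 q-2$ bullet, the hypothesis $|\PD|\le|\LD|$ forces $b\le\tfrac43 q-1$, whence $b+c\ge 4q-2-b\ge\tfrac83 q-1>\tfrac83 q-2$, a contradiction; so the smaller set blocks (or, dually, covers). For the $\tfrac52 q-\tfrac32$ bullet I would additionally use that $\LD$ must cover the $q^2+q+1-b$ points outside $\PD$; combining a careful covering estimate (accounting for the forced pairwise intersections of the lines of $\LD$) with $2b+c\ge 4q-2$ and solving the resulting two-variable optimization pins the threshold at $\tfrac52 q-\tfrac32$ and forces $\PD$ to block and $\LD$ to cover simultaneously. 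The prime bullet ($h=1$) is then immediate: by the previous bullet the smaller of $\PD,\LD$ is a blocking (covering) set which, being non-primal, contains no line and hence is a \emph{nontrivial} blocking set; Blokhuis' theorem gives it at least $\tfrac{3}{2}(q+1)$ points, so $|\cD|\ge 3(q+1)>\tfrac83 q-2$.

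For the two $3q-6\sqrt q+3$ bullets I would switch to the strong stability machinery for blocking sets. A preliminary count shows that $\delta_P\le c$ together with the size bound already forces $b\ge q-1$ (for smaller $b$ the set $\PD$ would have far more than $c$ skew lines), so $\PD$ is a set of size $q-1\le b\lesssim\tfrac32 q$ with few $0$-secants --- precisely the regime of the Sz\H{o}nyi--Weiner stability theorem. In the square case with $p\ge 200$ that theorem shows the $0$-secants pass through at most three points, so $\PD$ extends to a blocking set by adding at most three points; in the prime case the corresponding sharper result forces $\PD$ to contain $q-3$ collinear points, i.e.\ to sit inside a line up to four points. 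Applying the dual statements to $\LD$ completes the proof. Calibrating the constants $3q-6\sqrt q+3$ and $p\ge 200$ to the precise hypotheses of the stability theorems, and the bookkeeping needed to turn ``few $0$-secants'' into ``extendable by at most three points,'' is where the remaining work lies.
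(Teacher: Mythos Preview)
Your overall architecture matches the paper's exactly: the engine is the inequality $2|\PD|+|\LD|\ge 4q-2$ for a non-primal $\cD$ whose point part is not a blocking set (the paper's Proposition~\ref{pg2q}), and then bullets 1--3 are arithmetic while bullets 4--5 invoke the Sz\H{o}nyi--Weiner stability theorems, just as you say.

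The one genuine gap is that you do not prove the key inequality; ``a stability/minimality refinement of the affine argument'' is not a proof. The paper's argument splits into two cases. If the skew lines of $\PD$ are not all concurrent, it applies the dual of the Blokhuis--Brouwer--Sz\H{o}nyi partial cover theorem (Result~\ref{holes}): a point set of $\PG(2,q)$ blocking all but $h$ lines, not all concurrent, has size at least $2q-1-h/2$; with $h\le|\LD|$ this gives $2|\PD|+|\LD|\ge 4q-2$ immediately. If the skew lines \emph{are} all concurrent at a point $P$, then $\PD\cup\{P\}$ is a blocking set with $P$ essential, and Jamison (in the form of Result~\ref{tangents}) forces at least $2q-|\PD|$ skew lines through $P$; combining this with the elementary covering bound $c(\LD)\le|\LD|-q+1$ (Proposition~\ref{nagyszelo}, which is the ``careful covering estimate'' you allude to) and $|\LD|\ge q-1$ again yields a contradiction to $|\cD|+|\PD|\le 4q-3$. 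So the missing ingredient is precisely Result~\ref{holes}; your Jamison-only sketch cannot reach $2b+\delta_P\ge 4q-2$ without it.

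Two smaller remarks. For the $\tfrac52q-\tfrac32$ bullet you propose a two-variable optimization; the paper's route is simpler: the easy bound $|\LD|\ge q$ (Proposition~\ref{minqq}) gives $|\PD|\le|\cD|-q$, hence $|\cD|+|\PD|\le 2|\cD|-q\le 4q-3$, and the engine applies to $\PD$ directly (and by duality to $\LD$). For bullets 4--5, your identification of the tools is correct; the arithmetic that turns $\delta\le|\LD|\le 2q$ and $|\PD|\le\tfrac32(q+1)-3\sqrt q$ into the hypotheses of Results~\ref{stbl2} and~\ref{stblp}, and the bound $\delta/(2q+1-|\PD|)+1/100<4$, is straightforward once written out.
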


The paper is built up as follows. In Section 2, we briefly introduce the results and terminology regarding blocking sets we need in the sequel; then we proceed by proving our main Theorem \ref{main} in three steps. First we describe stable primal dominating sets and verify parts (i) to (v) in Subsection \ref{subs:primal}. Then we show that a small minimal non-primal dominating set $\cD$ cannot have long secants. Next we prove that if $\cD$ has only short secants, then the desired bound $|\cD|\geq 2q+2\left\lfloor \sqrt{q}\right\rfloor+2$ holds, confirming the missing part of the main theorem. Let us note that it is quite technical to handle the case when $q$ is not a square. In Section 3, we treat the Desarguesian case and prove Theorem \ref{fo2}. We end with some open problems and general remarks on the topic in Section 4.

\section{Proof of the main theorem}

Throughout this section, $\cD=\PD\cup\LD$ is a dominating set in an arbitrary projective plane $\Pi_q$.

\subsection{Results on blocking sets}\label{subs:bl}

It is easy to see that the set of all points of an arbitrary line of $\Pi_q$ is a blocking set of size $q+1$ (these example are called \emph{trivial} blocking sets). Let us give a quick result.

\begin{proposition}[folklore]\label{bltrivi}
A set of $k$ points blocks at most $kq+1$ lines. Equality holds if and only if the $k$ points are collinear.
\end{proposition}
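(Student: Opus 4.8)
The plan is to count the blocked lines by building up the point set one point at a time and controlling how many new lines each point contributes. Write the $k$ points as $P_1,\dots,P_k$; since a blocked line is precisely a line through at least one $P_i$, the set of blocked lines is exactly $\bigcup_{i=1}^k [P_i]$, where $[P_i]$ is the pencil of $q+1$ lines through $P_i$. First I would note that $|[P_1]|=q+1$. Then for each $i\ge 2$ the line $P_1P_i$ lies in both $[P_1]$ and $[P_i]$, so $[P_i]$ already meets the union of the earlier pencils, and hence contributes at most $q$ previously uncounted lines. Summing the incremental contributions gives
\[
\Bigl|\bigcup_{i=1}^k [P_i]\Bigr| \le (q+1) + (k-1)q = kq+1,
\]
which is the claimed upper bound.

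For the equality statement I would argue in both directions. If the $k$ points are collinear, lying on a common line $m$, then the blocked lines are $m$ together with the $q$ lines other than $m$ through each point; two such extra lines through distinct points cannot coincide, since a line meeting two of the points in a point other than on $m$ is impossible (their unique common line is $m$). Thus these $kq$ extra lines are pairwise distinct, and together with $m$ we count exactly $kq+1$ blocked lines, so equality holds. Conversely, suppose the points are not all collinear, which forces $k\ge 3$, and choose three non-collinear points, relabelled $P_1,P_2,P_3$. Then $P_3P_1$ and $P_3P_2$ are two \emph{distinct} lines lying in $[P_3]\cap([P_1]\cup[P_2])$, so at the third step $[P_3]$ contributes at most $q-1$ new lines. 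Feeding this strict deficit into the incremental count yields $\bigl|\bigcup_{i=1}^k[P_i]\bigr|\le kq<kq+1$, so equality forces collinearity.

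The only real subtlety is that the incremental bound is stated relative to an ordering of the points, so to pin down equality cleanly I would exploit the freedom to place the three non-collinear points first; a single strict inequality at the third step then suffices and no ordering-dependent case analysis is needed. An alternative, ordering-free route proceeds by double counting: writing $m_\ell=|\,[\ell]\cap\PD\,|$ for the number of chosen points on a line $\ell$, the identities $\sum_\ell m_\ell=k(q+1)$ and $\sum_\ell m_\ell(m_\ell-1)=k(k-1)$ (each pair of points lying on a unique line) control the number of lines with $m_\ell\ge 1$. The incremental pencil argument is shorter and makes the equality analysis transparent, so that is the route I would follow.
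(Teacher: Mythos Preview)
Your proof is correct and follows essentially the same incremental pencil-counting argument as the paper: add the points one by one, note that the first contributes $q+1$ lines and each subsequent point at most $q$ new ones since the joining line to an earlier point is already counted, and handle equality by observing that a non-collinear triple forces a strict loss at the third step. The only difference is that you spell out the equality case in more detail than the paper, which simply remarks that it ``follows immediately''.
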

\begin{proof}
Let $\{P_1,\ldots,P_k\}$ be a set of $k$ points. $P_1$ blocks exactly $q+1$ lines. For $i\geq 2$, as $P_jP_i$ is already blocked by $P_j$ ($1\leq j\leq i-1$), $P_i$ blocks at most $q$ new lines with equality if and only if $P_1,\ldots,P_i$ are collinear. Hence the total number of lines blocked is $q+1+(k-1)q$. The case of equality follows immediately.
\end{proof}

The above easy calculation shows that any blocking set must contain at least $q+1$ points and in case of equality, it is the point set of a line. A blocking set is \emph{nontrivial} if it does not contain a full line. A point of a blocking set $\cB$ is called \emph{essential} (for $\cB$) if $\cB\setminus\{P\}$ is not a blocking set, and a blocking set is minimal if and only if all its points are essential.

A \emph{Baer subplane} in $\Pi_q$ is a subplane of order $\sqrt{q}+1$ (so $q$ must be a square). It is well-known that a Baer subplane $\Pi'$ has $q+\sqrt{q}+1$ points and every line intersects $\Pi'$ in either one point (i.e., the line is a tangent line) or $\sqrt{q}+1$ points (i.e., the line is a ($\sqrt{q}+1$)-secant), every point of $\Pi'$ is incident with exactly $\sqrt{q}+1$ ($\sqrt{q}+1$)-secants, and every point not in $\Pi'$ is incident with exactly one ($\sqrt{q}+1$)-secant. Hence a Baer subplane is a blocking set, and the set of ($\sqrt{q}+1$)-secants of a Baer subplane (that is, a dual Baer subplane) is a covering set, both of size $q+\sqrt{q}+1$. Baer subplanes always exist in Desarguesian projective planes of square order. Bruen gave a combinatorial characterization of the smallest nontrivial blocking sets. 

\begin{result}\cite{Bruen}\label{Bruen}
A nontrivial blocking set in an arbitrary projective plane $\Pi_q$ of order $q$ has at least $q + \sqrt{q} + 1$ points. Moreover, if this lower bound is met, then the blocking set consists of the points of a Baer subplane of $\Pi_q$.
\end{result}

For general affine planes, the best but probably not sharp lower bound known is similar.

\begin{result}\cite{Bier, BT}\label{affin}
If $\cB$ is a blocking set in an arbitrary affine plane $\mathcal{A}_q$ of order $q$, then $|\cB|\geq q+\sqrt{q}+1$. 
\end{result}

Since Bruen's paper, many results have been obtained which describe the possible size and structure of blocking sets, mostly in Desarguesian projective planes. See \cite{survey} for more details.

\subsection{Primal dominating sets}\label{subs:primal}

First we treat primal dominating sets that contain $q$ concurrent lines. Due to duality, it is enough to handle this case. 

\begin{lemma}\label{minq1}
Let $\cD=\PD\cup\LD$ be a minimal dominating set in an arbitrary projective plane $\Pi_q$ of size less than $3q-1$. Suppose that there is a point $P$ such that $|[P]\cap\LD| = q$. Let $\ell$ be the unique line in $[P]\setminus\LD$. Then $\LD=[P]\setminus\ell$ and $\PD=[\ell]\setminus\{P\}$; hence $|\cD|=2q$ and $\cD$ is stable.
\end{lemma}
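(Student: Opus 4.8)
The plan is to pin down the two sets by combining a counting argument with minimality. Write $S=[\ell]\setminus\{P\}$ (so $|S|=q$) and $\mathcal{M}=\LD\setminus[P]$ for the lines of $\LD$ not through $P$. First I would record the basic observation that the $q$ lines of $[P]\cap\LD$ already dominate every point except those of $S$: a point $X\neq P$ off $\ell$ lies on $PX\in[P]\cap\LD$, while a point of $S$ lies among the lines through $P$ only on $\ell\notin\LD$. Hence each point of $S$ must be dominated by $\PD$ or by $\mathcal{M}$, and since every line of $\mathcal{M}$ meets $\ell$ in a single point of $S$, it can cover at most one point of $S$; in particular the point-domination of $S$ forces $|\mathcal{M}|\geq q-a$, where $a:=|\PD\cap S|$. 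Together with $|\cD|<3q-1$ this already yields the crude bound $b\leq q-2$, where $b$ denotes the number of points of $\PD$ lying off $\ell$.

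The heart of the argument is to show $a=q$, i.e.\ $S\subseteq\PD$. For this I would partition the $q^2$ lines missing $P$ into the $q$ pencils $\mathcal{L}_Q$ ($Q\in S$), where $\mathcal{L}_Q$ consists of the $q$ lines through $Q$ other than $\ell$. If $Q\in S\setminus\PD$, then every line of $\mathcal{L}_Q$ that is not in $\mathcal{M}$ must be blocked by $\PD$, and the only points of $\PD$ available are those off $\ell$ (the trace of such a line on $\ell$ is $Q\notin\PD$, and $P$ is not on it). Since a point off $\ell$ lies on at most one line of a fixed pencil $\mathcal{L}_Q$, this gives $q-|\mathcal{M}\cap\mathcal{L}_Q|\leq b$. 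Summing over the $q-a$ points of $S\setminus\PD$ and using $\sum_{Q}|\mathcal{M}\cap\mathcal{L}_Q|\leq|\mathcal{M}|$ produces $(q-a)q\leq(q-a)b+|\mathcal{M}|$. Feeding in $|\mathcal{M}|<2q-1-a-b$ (again from the size hypothesis) and writing $u=q-a$, $v=q-b$, everything collapses to the clean inequality $(u-1)(v-1)<0$. Since $b\leq q-2$ gives $v\geq2$, we must have $u=0$, that is $a=q$ and $S\subseteq\PD$.

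Once $S\subseteq\PD$, minimality finishes the job. No line $m\in\mathcal{M}$ can be essential: its intersection with $\ell$ lies in $S\subseteq\PD$, and every other point $R$ of $m$ lies on the second dominating line $PR\in[P]\cap\LD$, so deleting $m$ leaves nothing undominated; hence $\mathcal{M}=\emptyset$ and $\LD=[P]\setminus\ell$. Similarly $P\notin\PD$, for otherwise $P$ could only be essential through the unique non-$\LD$ line $\ell$ at $P$, forcing $\PD\cap\ell=\{P\}$, contradicting $S\subseteq\PD$. Finally no point $R\in\PD$ off $\ell$ can be essential: the only way would be for $R$ to be the unique blocker of some line $n\notin\LD$ through $R$, but such an $n$ avoids $P$ and therefore meets $\ell$ in a point of $S\subseteq\PD$ distinct from $R$, so $n$ is blocked twice. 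Thus $\PD=S$ and $\LD=[P]\setminus\ell$, giving $|\cD|=2q$; stability is immediate since by Theorem \ref{gold} no dominating set of size smaller than $2q$ exists.

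The main obstacle is the middle step: the pure packing/covering counts (e.g.\ those coming from Proposition \ref{bltrivi}) only give $|\PD|,|\LD|\geq q$ and leave a linear-size gap, so they cannot by themselves force the exact structure. What makes it work is organizing the blocking requirement pencil-by-pencil through the traces on $\ell$, and then recognizing that the size bound $|\cD|<3q-1$ is exactly strong enough to turn the resulting estimate into $(u-1)(v-1)<0$. I would expect the bookkeeping around the exceptional point $P$ (whether $P\in\PD$, and the fact that $P$ blocks no line off itself) to be the fiddly part to get exactly right.
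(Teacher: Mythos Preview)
Your proof is correct, but it is considerably more elaborate than necessary; the paper's argument reaches the same conclusion with a single counting step rather than a summed inequality.

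In the paper, once the case $S\subseteq\PD$ is set aside, one simply \emph{fixes} a single point $Q\in\ell\setminus(\PD\cup\{P\})$ and exhibits three pairwise disjoint collections of elements of $\cD$: the $q$ lines of $[P]\cap\LD$; for each of the $q$ lines in $[Q]\setminus\{\ell\}$, either that line (if it lies in $\LD$) or a point of $\PD$ on it (necessarily off $\ell$, since the line meets $\ell$ only in $Q\notin\PD$); and for each of the $q-1$ points of $\ell\setminus\{P,Q\}$, either that point (if in $\PD$) or a line of $\LD$ through it (which cannot pass through $P$ or $Q$). Disjointness of the three families is immediate from the incidence structure, and one reads off $|\cD|\geq 3q-1$, a contradiction. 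Your pencil-by-pencil summation over all $Q\in S\setminus\PD$ and the resulting inequality $(u-1)(v-1)<0$ recover exactly this, but the intermediate parameters $a,b,u,v,\mathcal{M}$ and the algebraic manipulation are not needed: one witness $Q$ already suffices. What your approach does buy is a slightly more symmetric bookkeeping that would generalise more readily if one wanted quantitative control on how many points of $S$ lie outside $\PD$; for the present lemma, however, the paper's direct count is both shorter and avoids the ``fiddly part'' you anticipated around $P$. Your final minimality deductions (ruling out $\mathcal{M}$, $P\in\PD$, and points of $\PD$ off $\ell$) are fine and match the paper's one-line appeal to minimality.
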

\begin{proof}
We have two cases: either every point of $\ell$ except possibly $P$ belongs to $\PD$ or there exists $Q\in \ell$, $Q\neq P$ such that $Q\notin \PD$. In the former case, $([\ell]\setminus\{P\})\cup([P]\setminus\{\ell\})\subseteq\cD$ is a dominating set of size $2q$ and, by minimality, this is $\cD$. It is easy to see that this construction is stable. In the latter case, we consider three disjoint groups of elements of $\cD$. There are $q$ lines of $\LD$ through $P$; the $q$ lines on $Q$ different from $\ell$ must either be in $\LD$ or contain a point from $\PD$, which requires $q$ more elements of $\cD$; furthermore, each one of the $q-1$ points of $\ell\setminus\{P,Q\}$ is also dominated by $\cD$ so it is either in $\PD$ or on a line of $\LD$, which requires $q-1$ more elements of $\cD$. Thus $|\cD|\geq 3q-1$, a contradiction.
\end{proof}

\begin{lemma}\label{minq2}
Let $\cD=\PD\cup\LD$ be a stable dominating set in an arbitrary projective plane $\Pi_q$ of size less than $3q-1$. Suppose that there is a point $P$ such that $|[P]\cap\LD|=q+1$. Then we have the following possibilities.
\begin{enumerate}
\item{$\LD=[P]$, $P\notin\PD$ and $\PD\cup\{P\}$ is a blocking set in $\Pi_q$ for which the only non-essential point may be $P$;}
\item{$\exists!\ell\notin[P]\colon\LD=[P]\cup\ell$, $P\notin\PD$ and $\PD\cup\{P\}$ is a blocking set in the affine plane $\Pi_q\setminus\ell$ for which the only non-essential point may be $P$.}
\end{enumerate}
\end{lemma}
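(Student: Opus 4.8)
The plan is to exploit the fact that the full pencil $[P]\subseteq\LD$ already dominates every point: each point $X\neq P$ lies on the line $PX\in[P]\subseteq\LD$, and $P$ lies on all lines of $[P]$. Thus the only remaining task of $\cD$ is to dominate the \emph{lines}, i.e.\ every line not in $\LD$ must meet $\PD$. I would first record two consequences of minimality. Since the lines through $P$ are all in $\LD$, the point $P$ dominates nothing beyond what $[P]$ already dominates, so $\cD\setminus\{P\}$ would still be dominating; minimality therefore forces $P\notin\PD$. Next, writing $T:=\LD\setminus[P]$ for the lines of $\LD$ missing $P$, a line $\ell\in T$ is dominated by the rest of $\cD$ precisely when $\ell$ meets $\PD$; as $\cD$ is minimal every such $\ell$ is essential, whence $\ell\cap\PD=\emptyset$.

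The heart of the argument is to prove $|T|\le 1$, and this is where stability enters. Suppose $\ell_1,\ell_2\in T$ are distinct and put $R:=\ell_1\cap\ell_2$. Since neither line passes through $P$ we have $R\neq P$, and since $\ell_1\cap\PD=\emptyset$ we have $R\notin\PD$, so $R\notin\cD$. Set $\cD'=\cD\cup\{R\}$ and $\cD''=\cD'\setminus\{\ell_1,\ell_2\}$. I claim $\cD''$ is still dominating: every line falls into one of three classes, each dominated: lines in $\LD''=[P]\cup(T\setminus\{\ell_1,\ell_2\})$ are dominated by membership; the two removed lines $\ell_1,\ell_2$ pass through $R\in\PD''$; and any line not in $\LD$ misses $P$ and was already blocked by $\PD$, which is unchanged. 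All points remain dominated by $[P]$. Hence $\cD'\supset\cD$ with $|\cD'|=|\cD|+1$ contains the dominating set $\cD''$ of size $|\cD|-1<|\cD|$, contradicting stability. Therefore $|T|\le 1$, so either $\LD=[P]$ or $\LD=[P]\cup\{\ell\}$ for a unique $\ell\notin[P]$.

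It remains to read off the blocking-set structure with $\cB:=\PD\cup\{P\}$. If $\LD=[P]$, then every line off $P$ lies outside $\LD$ and hence meets $\PD$, while every line through $P$ contains $P\in\cB$; thus $\cB$ is a blocking set of $\Pi_q$. If $\LD=[P]\cup\{\ell\}$ (so $P\notin\ell$, and $\PD\cap\ell=\emptyset$ by essentiality of $\ell$, placing $\cB$ in the affine part), the same reasoning shows every affine line of $\Pi_q\setminus\ell$ is blocked by $\cB$, giving an affine blocking set. For the essentiality claim, let $Q\in\PD$: minimality makes $Q$ essential in $\cD$, and since all points are dominated by $[P]$, the undominated object after deleting $Q$ must be a line $n\ni Q$ with $n\notin\LD$ and $n\cap\PD=\{Q\}$. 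Because $n\notin\LD\supseteq[P]$ we get $P\notin n$ and $n\neq\ell$, so $n\cap\cB=\{Q\}$; thus $n$ witnesses that $Q$ is essential for $\cB$ (as a projective, resp.\ affine, blocking set). Hence the only possibly non-essential point of $\cB$ is $P$.

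I expect the one genuinely delicate step to be the stability exchange in the second paragraph: one must verify that the auxiliary point $R$ is new (not already in $\cD$), which relies on the essentiality-forced fact $\ell_1\cap\PD=\emptyset$, and that deleting $\ell_1,\ell_2$ undominates nothing except those two lines, which $R$ then rescues. Everything else is bookkeeping resting on the single clean observation that $[P]\subseteq\LD$ trivializes point-domination. I would also remark that the hypothesis $|\cD|<3q-1$ does not seem to be needed for this particular argument—stability alone drives the conclusion—but I would keep it to remain consistent with the surrounding lemmas.
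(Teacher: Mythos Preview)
Your proof is correct and follows essentially the same approach as the paper: both use minimality to get $P\notin\PD$, the stability exchange (adding the intersection of two extra lines and deleting those lines) to force $|\LD\setminus[P]|\le 1$, and then read off the blocking-set and essentiality claims from minimality. Your version is somewhat more careful in explicitly verifying $\ell\cap\PD=\emptyset$ for every $\ell\in T$ up front (hence $R\notin\PD$), whereas the paper compresses this into a one-line remark; your observation that the hypothesis $|\cD|<3q-1$ plays no role in this particular lemma is also correct.
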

\begin{proof}
As $[P]\subseteq\LD$, every point is dominated by $[P]$ and, clearly, $P\notin\PD$ as $\cD$ is minimal. If there were at least two lines in $\LD\setminus[P]$, adding their intersection point to $\cD$ provided a dominating set with at least two surplus lines, in contradiction with $\cD$ being stable. 

If there is exactly one line $\ell$ in $\LD\setminus[P]$, then $\PD\cap\ell=\emptyset$ (otherwise $\cD\setminus\{\ell\}$ was also a dominating set), hence $\PD\cup\{P\}$ is a blocking set in the affine plane $\Pi_q\setminus\ell$. If $\LD\setminus[P]$ is empty, then $\PD$ must block every line except possibly those of $\LD$, thus $\PD\cup\{P\}$ forms a blocking set of $\Pi_q$. 
Since $\cD$ is minimal, any point $Q\in\PD$ is incident with a line not in $\LD$ that is tangent to $\PD$. As $[P]\subset\LD$, these lines are tangents to $\PD\cup\{P\}$, so every point of $\PD$ is essential for the respective (affine or projective) blocking set.
\end{proof}

Lemmas \ref{minq1} and \ref{minq2} (and their duals) allow us to use Results \ref{Bruen} and \ref{affin} on projective and affine blocking sets in order to prove the assertions of Theorem \ref{main} (i)-(v). 

\begin{proof}[Proof of Theorem \ref{main}, primal case] \label{proofmain}
Let $\cD$ be a minimal primal dominating set. If $|\cD|\geq 3q-1$, we have Theorem \ref{main} (v). If $|\cD|<3q-1$, then by primality and duality we may assume that there is a point $P$ such that $|[P]\cap\LD|\geq q$. 

Suppose first that $\cD$ is stable. Then, according to Lemmas \ref{minq1} and \ref{minq2}, $P\notin\PD$ and we have the following possibilities:
\begin{itemize}
\item{$|\cD|=2q$, and $\cD$ is exactly the dominating set described in Theorem \ref{main} (i).}
\item{$\LD=[P]$ and $\cB=\PD\cup\{P\}$ is a projective blocking set. 

If $\cB$ is minimal, then it is a line, a Baer subplane, or $|\cB|>q+\sqrt{q}+1$ (Result \ref{Bruen}).
If $\cB$ is a line, then $\PD$ consists of precisely $q$ collinear points; hence the dual of Lemma \ref{minq1} gives that $|\LD|=q$, a contradiction with $\LD=[P]$.
If $\cB$ is a Baer subplane, then $|\PD|=q+\sqrt{q}$ and $|\LD|=q+1$ give $|\cD|= 2q+\sqrt{q}+1$ (Theorem \ref{main} (iii)).
If $\cB$ is neither a line nor a Baer subplane, then $|\cD|>2q+\sqrt{q}+1$ (Theorem \ref{main} (iv)).

If $\cB$ is not minimal, then $\PD=\cB\setminus\{P\}$ is a minimal blocking set, hence either $\PD$ is a line or $|\PD|\geq q+\sqrt{q}+1$. If $\PD$ is a line, then $|\cD|=2(q+1)$ (Theorem \ref{main} (ii)). If $\PD$ is a Baer subplane, then $|\cD|>2q+\sqrt{q}+1$ (Theorem \ref{main} (iv)).}
\item{$\exists!\ell\notin[P]\colon\LD=[P]\cup\ell$, $\PD\cup\{P\}$ is an affine blocking set in $\Pi_q\setminus\ell$. Then $|\LD|=q+2$ and $|\PD|\geq q+\sqrt{q}$ (Result \ref{affin}), hence $|\cD| > 2q+\sqrt{q}+1$ (Theorem \ref{main} (iv)).}
\end{itemize}

Suppose now that $\cD$ is a smallest minimal non-stable primal dominating set, and suppose to the contrary that $|\cD|\leq 2q+\sqrt{q}+1$. (Note that if $q>4$, then $|\cD|<3q-1$ follows.) $|[P]\cap\LD|=q$ is not possible, otherwise $\cD$ would be stable by Lemma \ref{minq1}.
By the properties of $\cD$, we can obtain a stable dominating set $\cD'=\PD'\cup\LD'$ by adding a vertex $v$ to $\cD$ and removing the appearing surplus vertices (at least two) from it. We claim that $\cD'$ is primal. Suppose to the contrary. If $|[P]\cap\LD'|\geq q$, we are ready; otherwise there are two lines, $e_1$ and $e_2$ in $[P]\setminus\LD'$. As $\cD'$ is not primal, both lines contain at least one point not in $\PD'\cup\{P\}$ and, as $|\PD'|\leq |\PD|+1$, we find a point $R\notin(\PD\cup\PD')$ on, say, $e_1$. Then the $q$ lines of $[R]\setminus\{e_1\}$, the points of $[e_1]\setminus\{P,R\}$ are dominated by distinct vertices of $\cD'$, at most one of which may not be in $\cD$; hence, as $[P]\subset\LD$, we see that $|\cD|\geq q+q-1 -1+q+1=3q-1$, a contradiction. So $\cD'$ is indeed primal, stable, and $|\cD'|\leq 2q+\sqrt{q}$.

According to the above verified possibilities, $\cD'$ is one of the structures given in Theorem \ref{main} (i) and (ii), both of which are symmetric in the roles of points and lines. Hence, without loss of generality, we may assume that $v=Q$ is a point.
Considering the two possibilities, we see that $\PD'\subset [e]$ for a line $e$, $Q\in e$, and either $|\cD'|=2q$ and all lines of $[Q]\setminus\{e\}$ are tangents to $\PD'$, or $\cD'=[e]\cup[P]$ for some $P\notin e$ and all lines of $[Q]\setminus\{e,PQ\}$ are tangents to $\PD'$. In both cases, these lines must be dominated by pairwise distinct vertices of $\cD\setminus\cD'$, so we get $|\cD|\geq 2q-1+q=3q-1$ and $|\cD|\geq 2(q+1)-1+q-1=3q$, respectively, in contradiction with $|\cD|\leq 2q+\sqrt{q}+1$ (under $q>4$).
\end{proof}

\subsection{Non-primal case: short secants, improved bounds on $|\cD|$}

Having described primal stable dominating sets, we always suppose in the sequel that every line of $\Pi_q$ intersects $\PD$ in at most $q-1$ points and dually, every point of $\Pi_q$ is on at most $q-1$ lines of $\LD$. We will see that in this case, $\PD$ cannot have long secants, nor can $\LD$ densely cover any point. To prove this, we need some simple results that establish connections between the maximal number of points covered by a set of lines and its maximum coverage number.

\begin{lemma}[folklore]\label{maxcover}
Suppose that $\cL$ is a set of lines in $\Pi_q$ such that the maximum number of concurrent lines in $\cL$ is exactly $c$. Then the number of points covered by $\cL$ is at most \[c^2-(|\cL|+1)c+|\cL|(q+1)+1.\]
\end{lemma}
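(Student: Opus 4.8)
The plan is to isolate a point of maximal concurrency and then estimate, line by line, how much each remaining line can add beyond the pencil it determines. Write $m=|\cL|$. Since the maximum number of concurrent lines of $\cL$ is exactly $c$, there is a point $P$ lying on exactly $c$ lines of $\cL$, while no point lies on more than $c$ of them. I would split $\cL$ into the $c$ lines through $P$ and the $m-c$ lines missing $P$, and bound the two contributions separately.

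First I would count the points on the pencil at $P$: these consist of $P$ together with $q$ further points on each of the $c$ lines, and since any two of these lines meet only at $P$, the $cq$ further points are pairwise distinct; hence the pencil covers exactly $cq+1$ points. Next I would bound the contribution of each remaining line $b\in\cL$ with $P\notin b$. Such a line meets each of the $c$ concurrent lines in exactly one point, and these $c$ points are pairwise distinct: were two of them equal, the common point would lie on two lines of the pencil and therefore coincide with $P$, contradicting $P\notin b$. Thus $b$ already contains $c$ points of the pencil and can cover at most $(q+1)-c$ points not previously counted. Summing, the number of covered points is at most $(cq+1)+(m-c)(q+1-c)$, and expanding this gives exactly $c^2-(m+1)c+m(q+1)+1$, as claimed.

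The argument is essentially a careful double count, so the step I would be most careful about is precisely the two distinctness claims that guarantee each extra line genuinely loses $c$ points to the pencil; both reduce to the projective-plane axiom that two distinct lines meet in a unique point, and the slight care needed there (together with checking $q+1-c\ge 0$, which holds as $c\le q+1$) is the only real obstacle. It is worth noting that cruder estimates are not strong enough: bounding $\sum_P\binom{d_P}{2}=\binom{m}{2}$ by convexity, where $d_P$ denotes the number of lines of $\cL$ through $P$, yields only the weaker bound $m(q+1)-m(m-1)/c$. The pencil count above is what makes the stated inequality sharp, equality being attained, for instance, by the union of $c$ lines through a point $P$ and $m-c$ further lines through a second point $Q\neq P$ lying on one of those $c$ lines (valid while this keeps the maximum concurrency equal to $c$).
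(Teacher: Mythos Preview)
Your proof is correct and follows essentially the same approach as the paper's: isolate a pencil of $c$ concurrent lines, count the $cq+1$ points it covers, and observe that each of the remaining $|\cL|-c$ lines contributes at most $q+1-c$ new points. The extra remarks about distinctness, the weaker convexity bound, and the equality case are sound but go beyond what the paper records.
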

\begin{proof}
Let $n=|\cL|$ and $\cL=\{\ell_1,\ldots,\ell_n\}$ so that $\ell_1,\ldots,\ell_c$ are concurrent. Then for every $c+1\leq i\leq n$, $\ell_i$ covers at most $q+1-c$ points that are not covered by $\ell_1,\ldots,\ell_c$. Thus the total number of covered points is at most $1+cq+(n-c)(q+1-c)=c^2-(n+1)c+n(q+1)+1$.
\end{proof}

\begin{defi}
Let $c=c(\LD)$ be the maximal number such that there exists a point $P$ with $|[P]\cap\LD|=c$; dually, $k=k(\PD)$ is the maximal number such that there exists a line $\ell$ with $|\ell\cap\PD|=k$.
\end{defi}

\begin{lemma}\label{big}
Suppose that $|\LD|+2-q\leq c\leq q-1$. Then $|\LD|\geq 4q-2-|\cD|$.
\end{lemma}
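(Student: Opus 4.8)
The plan is to estimate in two ways the number of points of $\Pi_q$ covered by the lines of $\LD$, and then to combine the two estimates. Write $n=|\LD|$. Since $\cD$ is a dominating set, every point is either in $\PD$ or incident with a line of $\LD$; hence every point not covered by $\LD$ must lie in $\PD$, so $\LD$ covers at least $q^2+q+1-|\PD|$ points. On the other hand, by the definition of $c=c(\LD)$ the maximum number of concurrent lines in $\LD$ is exactly $c$, so Lemma \ref{maxcover} applies to $\cL=\LD$ and bounds the number of covered points from above by $c^2-(n+1)c+n(q+1)+1$.

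First I would chain these two bounds. Combining
\[
q^2+q+1-|\PD| \;\leq\; c^2-(n+1)c+n(q+1)+1
\]
and solving for $|\PD|$ yields
\[
|\PD| \;\geq\; q^2+q-c^2+c+n(c-q-1).
\]
The target inequality $|\LD|\geq 4q-2-|\cD|$ is equivalent, after substituting $|\cD|=|\PD|+n$, to $2n+|\PD|\geq 4q-2$. So it suffices to add $2n$ to the displayed lower bound and show the outcome is at least $4q-2$; explicitly,
\[
2n+|\PD| \;\geq\; q^2+q-c^2+c+n(c-q+1).
\]

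The decisive step is to use the two hypotheses on $c$ to clear the free parameters $n$ and $c$. Since $c\leq q-1$, the coefficient $c-q+1$ of $n$ is at most $0$; and the lower bound $c\geq n+2-q$ rearranges to $n\leq c+q-2$. Because the coefficient of $n$ is nonpositive, replacing $n$ by the larger value $c+q-2$ can only decrease the right-hand side, giving
\[
2n+|\PD| \;\geq\; q^2+q-c^2+c+(c+q-2)(c-q+1).
\]
Expanding $(c+q-2)(c-q+1)=c^2-c-q^2+3q-2$ makes every term involving $c$ and every quadratic term in $q$ cancel, leaving exactly $4q-2$, as required. The one delicate point I anticipate is the bookkeeping of inequality directions: one must check that $c-q+1\leq 0$ so that enlarging $n$ to its hypothesized upper bound is a valid, bound-preserving substitution, and observe that the constraint $c\geq n+2-q$ is precisely what is needed to make $n$ disappear cleanly. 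Everything else is routine algebra, and the clean cancellation down to $4q-2$ serves as a useful consistency check.
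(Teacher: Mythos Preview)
Your proof is correct and follows essentially the same approach as the paper: both combine the lower bound $q^2+q+1-|\PD|$ on the number of points covered by $\LD$ with the upper bound from Lemma~\ref{maxcover}, then use the hypotheses on $c$ to deduce $2|\LD|+|\PD|\geq 4q-2$. The only cosmetic difference is that the paper views the bound as a quadratic in $c$ (noting the interval $[\,|\LD|+2-q,\,q-1\,]$ is symmetric about the vertex, so the maximum occurs at $c=q-1$), whereas you rearrange and treat it as linear in $n$ before substituting; the algebra collapses to the same $4q-2$ either way.
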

\begin{proof}
As $\cD$ is a dominating set, $\LD$ covers every point not in $\PD$. Compared with the bound of Lemma \ref{maxcover}, we obtain
\[c^2-(|\LD|+1)c+|\LD|(q+1)+1\geq q^2+q+1-|\PD|.\]
By the assumption on $c$, the maximum of the left-hand side in $c$ is achieved for $c=q-1$ (and $c=|\LD|+2-q$), thus
\[2|\LD|+|\PD|\geq 4q-2.\]
Substituting $|\PD|+|\LD|=|\cD|$, we obtain the stated formula.
\end{proof}

We will also need an easy bound.

\begin{proposition}\label{minqq}
$|\cD|\geq q^2+q-(q-1)|\PD|$ and $|\cD|\geq q^2+q-(q-1)|\LD|$. In particular, if $|\cD| < 3q-1$, then $|\PD|\geq q$ and $|\LD|\geq q$.
\end{proposition}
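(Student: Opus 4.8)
The plan is to read both inequalities as two dual manifestations of the domination condition and derive each from the trivial blocking bound of Proposition \ref{bltrivi}. Since the two statements are exchanged by point-line duality, I would prove only the first, $|\cD|\geq q^2+q-(q-1)|\PD|$, and obtain the second by dualizing.

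For the first inequality, the key observation is that domination of the \emph{lines} forces $\PD$ to behave like a blocking set on a large part of the plane. Indeed, every line $\ell\notin\LD$ must be dominated by some vertex of $\cD$; since $\ell\notin\LD$, this domination can only come from a point of $\PD$ lying on $\ell$. Hence every one of the $q^2+q+1-|\LD|$ lines outside $\LD$ is blocked by $\PD$. On the other hand, Proposition \ref{bltrivi} tells us that $|\PD|$ points block at most $|\PD|q+1$ lines in total. Comparing these two counts gives
\[ q^2+q+1-|\LD|\ \leq\ |\PD|q+1, \]
which rearranges to $q^2+q\leq q|\PD|+|\LD|=(q-1)|\PD|+|\cD|$, i.e.\ the claimed bound. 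The dual argument --- every point outside $\PD$ must lie on a line of $\LD$, and $|\LD|$ lines cover at most $|\LD|q+1$ points by the dual of Proposition \ref{bltrivi} --- yields $|\cD|\geq q^2+q-(q-1)|\LD|$.

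For the ``in particular'' clause I would simply substitute the hypothesis $|\cD|<3q-1$ into the first inequality. This gives $q^2+q-(q-1)|\PD|<3q-1$, hence $(q-1)^2<(q-1)|\PD|$; dividing by $q-1>0$ forces $|\PD|\geq q$, and the symmetric computation with the second inequality forces $|\LD|\geq q$.

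Since every step is an elementary count, there is no serious obstacle here; the only thing to get right is the direction of the domination requirement --- it is domination of the lines (not of the points) that turns $\PD$ into a near-blocking set and thereby invokes Proposition \ref{bltrivi}. Everything else is immediate arithmetic.
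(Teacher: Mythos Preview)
Your proof is correct and follows essentially the same argument as the paper: lines outside $\LD$ must be blocked by $\PD$, so Proposition \ref{bltrivi} bounds $|\LD|$ from below, yielding the inequality, with duality giving the second one and direct substitution the ``in particular'' clause. The only cosmetic difference is that the paper phrases the key observation contrapositively (``every line not blocked by $\PD$ must be in $\LD$''), but the logic is identical.
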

\begin{proof}
As every line not blocked by $\PD$ must be in $\LD$, Proposition \ref{bltrivi} gives $|\cD|=|\PD|+|\LD|\geq |\PD|+q^2+q+1-(|\PD|q+1)=q^2+q-(q-1)|\PD|$. If $|\cD|<3q-1$, this immediately gives $|\PD|\geq q$. By duality, we are done.
\end{proof}

Note that Proposition \ref{minqq} relies on Proposition \ref{bltrivi} only and it immediately gives $|\cD|\geq 2q$ (Theorem \ref{gold}); moreover, the characterization of equality also follows from Proposition \ref{bltrivi}.

Now we are ready to prove that if a stable dominating set is small but not primal, then $c$ and $k$ are small. More precisely,

\begin{proposition}\label{nagyszelo}
Assume that $\cD$ is not primal. Suppose that $|\cD|+|\PD|\leq 4q-3$. Then $k\leq |\PD|-q+1$. If $|\cD|+|\LD|\leq 4q-3$, then $c\leq |\LD|-q+1$. Both assertions hold if $|\cD|\leq (5q-3)/2$.
\end{proposition}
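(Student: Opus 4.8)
The plan is to recognize that the second assertion is essentially the contrapositive of Lemma \ref{big}, that the first follows by point--line duality, and that the final claim reduces to a short application of Proposition \ref{minqq}. Since Lemma \ref{big} already packages the substantive counting (via the coverage estimate of Lemma \ref{maxcover}), I expect the proof to be a clean deduction rather than new work.

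First I would prove the statement on $c$. Assume $|\cD|+|\LD|\leq 4q-3$, equivalently $|\LD|<4q-2-|\cD|$. Lemma \ref{big} asserts that the chain $|\LD|+2-q\leq c\leq q-1$ forces $|\LD|\geq 4q-2-|\cD|$, which now fails; hence at least one of the two bounds on $c$ must be violated. Here is where non-primality enters: since $\cD$ contains no $q$ concurrent lines, we have $c\leq q-1$, so the upper bound holds and it must be the \emph{lower} bound that fails, i.e.\ $c<|\LD|+2-q$. As $c$ is an integer this reads $c\leq|\LD|-q+1$, as claimed. The statement $k\leq|\PD|-q+1$ under $|\cD|+|\PD|\leq 4q-3$ is then obtained verbatim from the point--line dual of Lemma \ref{big}, using that $\cD$ has no $q$ collinear points, so that $k\leq q-1$.

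It remains to verify that the hypothesis $|\cD|\leq(5q-3)/2$ guarantees both size conditions. Since $(5q-3)/2<3q-1$ for every $q\geq 1$, Proposition \ref{minqq} applies and yields $|\PD|\geq q$ and $|\LD|\geq q$. Writing $|\PD|=|\cD|-|\LD|\leq|\cD|-q$ gives
\[
|\cD|+|\PD|\leq 2|\cD|-q\leq 2\cdot\frac{5q-3}{2}-q=4q-3,
\]
and symmetrically $|\cD|+|\LD|=2|\cD|-|\PD|\leq 2|\cD|-q\leq 4q-3$. Thus both of the preceding cases apply, so both $k\leq|\PD|-q+1$ and $c\leq|\LD|-q+1$ hold.

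I do not anticipate a genuine obstacle: the only points demanding care are (a) correctly identifying the lower bound on $c$ as the one that fails—precisely the step where non-primality is used to rule out $c>q-1$—and (b) confirming the elementary inequality $(5q-3)/2<3q-1$ so that Proposition \ref{minqq} is legitimately available for the final reduction. Everything else is bookkeeping with the integrality of $c$ and $k$.
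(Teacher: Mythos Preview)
Your proof is correct and follows essentially the same route as the paper: both invoke the contrapositive of Lemma \ref{big} together with non-primality to bound $c$ (and, dually, $k$), and both reduce the case $|\cD|\leq(5q-3)/2$ to the earlier hypotheses via Proposition \ref{minqq}. If anything, you spell out the contrapositive step and the integrality of $c$ more explicitly than the paper does.
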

\begin{proof}
By duality, it is enough to treat the assertions regarding $c$.
If $|\cD|\leq (5q-3)/2$, then Proposition \ref{minqq} gives $|\PD|\geq q$, so $|\LD|\leq |\cD|-q$, whence $|\cD|+|\LD|\leq 4q-3$ follows. As $\cD$ is not primal, $c\leq q-1$ holds. By $|\cD|+|\LD|\leq 4q-3$, Proposition \ref{big} gives $c\leq |\LD|-q+1$.
\end{proof}

By Proposition \ref{nagyszelo}, we see that if $\PD$ is not too large, then it has no long secants. On the other hand, $\PD$ must block almost all lines (the lines of $\LD$ may not be blocked). In such a situation the standard equations have already proved useful \cite{BBSzW,Bruen}, so we use them in the next proposition, a major step in the proof. Note that by duality, $|\PD|\leq |\LD|$ may be assumed.

\begin{proposition}\label{josagvan} 
If $\cD$ is a non-primal dominating set in $\Pi_q$, $q>81$, of size $|\cD|\leq 2(q+\sqrt{q}+1)$, then $|\PD| > q+2\sqrt{q}+1-k$ holds or $|\PD| \geq q+\left\lfloor \sqrt{q}\right\rfloor+1$.
\end{proposition}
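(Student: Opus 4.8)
The plan is to argue by contradiction, assuming that \emph{both} alternatives fail, i.e.\ that $|\PD|\le q+\lfloor\sqrt q\rfloor$ and $|\PD|\le q+2\sqrt q+1-k$ hold simultaneously, and to exploit the two ``standard equations'' satisfied by $\PD$ as an almost-blocking set. By the duality between points and lines I may assume $|\PD|\le|\LD|$, so that $b:=|\PD|\le q+\sqrt q+1$. Proposition~\ref{minqq} gives $b\ge q$ (note $|\cD|\le 2(q+\sqrt q+1)<3q-1$ for $q>81$), and since $|\cD|\le 2(q+\sqrt q+1)\le(5q-3)/2$ for $q>81$, Proposition~\ref{nagyszelo} applies and yields the crucial upper bound $k\le b-q+1$ on the longest secant of $\PD$.

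Next I would set up the standard counting. Writing $t_i$ for the number of lines meeting $\PD$ in exactly $i$ points, the identities $\sum t_i=q^2+q+1$, $\sum i\,t_i=b(q+1)$ and $\sum i(i-1)t_i=b(b-1)$ hold, and every externally unblocked line (there are $t_0$ of them) must lie in $\LD$, so $t_0\le|\LD|=|\cD|-b$. Since all secants carry at most $k$ points, $\sum_{i\ge 1}(k-i)(i-1)t_i\ge 0$; expanding this with the three identities and substituting $t_0\le 2(q+\sqrt q+1)-b$ produces a single inequality of the shape $b(b-1)\le k\bigl(q(b-q+1)+2\sqrt q+1\bigr)$, that is, a lower bound on $k$ in terms of $b$.

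The argument then splits on the size of $k$. When $k\le\lfloor\sqrt q\rfloor$, inserting $k\le\sqrt q$ and the assumed $b\le q+\lfloor\sqrt q\rfloor$ into this inequality makes its two sides contradict one another for $q>81$ (the difference is a polynomial in $\sqrt q$ that stays positive on the relevant range), forcing $b\ge q+\lfloor\sqrt q\rfloor+1$; this is the clean half. When $k\ge\lfloor\sqrt q\rfloor+1$, the bound $k\le b-q+1$ of Proposition~\ref{nagyszelo} already gives $b\ge q+k-1$, which settles everything except the single borderline situation $b=q+\lfloor\sqrt q\rfloor$, $k=\lfloor\sqrt q\rfloor+1$, where both conclusions are (barely) false. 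A complementary count through the $q+1-k$ points of a longest secant lying off $\PD$—each forcing $\ge q-b+k$ lines of $\LD$, pairwise distinct across these points—gives $|\LD|\ge(q+1-k)(q-b+k)$ and can be used to reinforce the estimates.

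The hard part is precisely this borderline case. There all the inequalities above are essentially tight, which pins $|\cD|$ to its maximum $2(q+\sqrt q+1)$ and forces near-equality in $\sum_{i\ge1}(k-i)(i-1)t_i\ge0$; the latter means $\PD$ meets (almost) every blocked line in $1$ or $\sqrt q+1$ points, so $\PD$ is forced to look like a Baer subplane with one point deleted. To close the gap I would bring in the covering side: $\LD$ must cover all $q^2+q+1-b$ points off $\PD$ while, by the dual of Proposition~\ref{nagyszelo}, having concurrency at most $|\LD|-q+1\approx\sqrt q$, and Lemma~\ref{maxcover} combined with the rigid structure just described should be incompatible with covering that many points. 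Making these constants close, and in particular carrying the floor functions correctly when $q$ is not a perfect square (so that $\sqrt q+1$ is not an integer and the borderline value of $k$ shifts), is where essentially all of the technical difficulty lies.
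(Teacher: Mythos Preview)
Your reduction to the borderline case $k=\lfloor\sqrt q\rfloor+1$, $|\PD|=q+\lfloor\sqrt q\rfloor$ via the weighted sum $\sum_{i\ge1}(k-i)(i-1)t_i\ge0$ is exactly what the paper does, and you have correctly located where the real work lies. The gap is in your proposed treatment of that borderline case.

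The covering-side argument via Lemma~\ref{maxcover} will not close it. With $|\PD|=q+\lfloor\sqrt q\rfloor$ and $|\cD|\le 2(q+\sqrt q+1)$ one has $|\LD|\le q+\sqrt q+2$ and (by the dual of Proposition~\ref{nagyszelo}) $c\le\sqrt q+3$; plugging these into Lemma~\ref{maxcover} gives an upper bound of roughly $q^2+q+\sqrt q$ on the number of covered points, comfortably above the $q^2-\lfloor\sqrt q\rfloor+1$ points that must be covered. This is not an accident: a dual Baer subplane \emph{does} cover everything with exactly these parameters, so a pure covering count cannot separate the legal configuration (Baer plus dual Baer) from the hypothetical illegal one (Baer-minus-a-point plus something). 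Your complementary count $|\LD|\ge(q+1-k)(q-b+k)$ collapses to $|\LD|\ge q-\lfloor\sqrt q\rfloor$ at the borderline, which is weaker than Proposition~\ref{minqq}.

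What the paper actually does is stay on the point side and squeeze much harder. First it sharpens ``near-equality in the weight sum'' into the exact statement that every line meets $\PD$ in $0$, $1$, $k-1$ or $k$ points (Lemma~\ref{egyfajta}); note the extra value $k-1$, which your sketch omits and which is essential. From this, every point of $\PD$ has a type $(\alpha,\beta)$ recording the numbers of $k$- and $(k-1)$-secants through it, governed by a Diophantine equation with at most two solutions. One then shows that no exterior point lies on three long secants (otherwise $|\cD|>2(q+\lfloor\sqrt q\rfloor+1)$), so the numbers $p_i$ of long secants through exterior points satisfy $\sum(p_i-1)(p_i-2)=0$; expanding this via the standard equations produces an explicit function of $(q,k,\beta_0,b)$ that is shown, by a fairly tedious case analysis (including the subcases $\beta_0\in\{0,1\}$ where $q$ is forced into special arithmetic forms like $k^2-1$ or $k^2-2k+2$), to have no feasible integer solutions for $q>81$. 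None of this is visible from the covering side.
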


The proof of this proposition is quite laborous, so we devote a standalone subsection to it. Note that if Proposition \ref{josagvan} holds, then we are ready to confirm Theorem \ref{main}:

\begin{proof}[Proof of Theorem \ref{main}, second part] 
Suppose that $\cD$ is non-primal, $|\cD|\leq 2(q+\sqrt{q}+1)$, and $q>81$. By duality, assume that $|\PD|\leq |\LD|$. 
Proposition \ref{josagvan} yields either $|\PD| > q+2\sqrt{q}+1-k$ or $|\PD| \geq q+\left\lfloor \sqrt{q}\right\rfloor+1$. In the former case, together with Proposition \ref{nagyszelo} we get $2|\PD| > q+2\sqrt{q}+1-k+(q+k-1)$, which also implies $|\PD|\geq q+ \left\lfloor \sqrt{q}\right\rfloor +1$. Thus $|\cD|\geq 2|\PD|\geq 2(q+\left\lfloor \sqrt{q}\right\rfloor+1)$.
\end{proof}

\subsection{Proof of Proposition \ref{josagvan}}

Throughout this section we suppose that $|\cD|\leq 2(q+\sqrt{q}+1)$, $|\PD|\leq q+2\sqrt{q}+1-k$, and we are about to show that $|\PD| \geq q+\left\lfloor \sqrt{q}\right\rfloor+1$. Note that $k\geq \left\lfloor \sqrt{q}\right\rfloor+2$ implies immediately that $|\PD| \geq q+\left\lfloor \sqrt{q}\right\rfloor+1$ (Proposition \ref{nagyszelo}), so we also assume $k\leq \left\lfloor\sqrt{q}\right\rfloor+1$. Note that if $k=2$, then $|\PD|\leq q+2$ and, as all skew lines to $\PD$ are in $\LD$, we have $|\LD|\geq q^2+q+1-|\PD|(q+1)+\binom{|\PD|}{2}\geq \binom{q}{2}>|\cD|$ if $q\geq 8$. Thus we assume $k\geq 3$.

\subsubsection{The case $k\leq \sqrt{q}$}

Take an arbitrary ordering of the lines of $\Pi_q$ and let $m_i$ be the number of the points of $\PD$ on the $i$th line. Observing that $m_i=0$ implies that the corresponding line is in $\LD$, $s:=|\{i: m_i=0\}|\leq |\LD|$ follows. Then counting in two different ways the number of pairs $(P, \ell)$ with $P \in \PD\cap\ell$, and triples $(P, Q, \ell)$ with $P, Q \in \PD\cap\ell$, we obtain what are referred to as the standard equations:

\begin{eqnarray*}
\sum_{i=1}^{q^2+q+1}m_i &=& |\PD|(q+1), \\
\sum_{i=1}^{q^2+q+1}m_i(m_i-1) &=& |\PD|(|\PD|-1).
\end{eqnarray*}

Consider the sum
\begin{equation}
w(\PD):=\sum_{i: m_i>0}(m_i-1)(m_i-k) \label{totweird}
\end{equation}

As $m_i\leq k$ for all $1\leq i\leq q^2+q+1$, we have that $w(\PD)\leq 0$. On the other hand, using the standard equations we obtain
\begin{eqnarray*}
w(\PD)&=&\sum_{i: m_i>0} m_i(m_i-1)-k\sum_{i: m_i>0} m_i+\sum_{i: m_i>0}k\\
&=& |\PD|^2-(k(q+1)+1)|\PD|+k(q^2+q+1-s).
\end{eqnarray*}

Then $s\leq |\LD|$ and $|\PD|+|\LD|=|\cD|$ imply
\begin{equation}
w(\PD)\geq |\PD|^2-(kq+1)|\PD|+k(q^2+q+1-|\cD|). \label{lbweird}
\end{equation}

Using $|\cD|\leq 2(q+\sqrt{q}+1)$, $|\PD|\leq q+2\sqrt{q}+1-k$ and $q+2\sqrt{q}+1-k\leq (kq+1)/2$ (which holds if $k\geq 3$), the right hand side of inequality \eqref{lbweird} can be estimated as 
\begin{eqnarray}
0&\geq& w(\PD)\geq |\PD|^2-(kq+1)|\PD|+k(q^2+q+1-|\cD|) \nonumber\\ &\geq& (q+2\sqrt{q}+1-k)^2- (kq+1)(q+2\sqrt{q}+1-k)+ k(q^2+q+1- 2(q+\sqrt{q}+1)) \nonumber\\ &=& 
 (q+1)(\sqrt{q}-k)^2+4(q+\sqrt{q})(\sqrt{q}-k) +2(\sqrt{q}-k). \label{csodas}
\end{eqnarray}

This is in turn a contradiction if $\sqrt{q}-k >0$. Note that $k=\sqrt{q}$ is only possible if we have equality in \eqref{csodas}, hence $|\PD|= q+2\sqrt{q}+1-k= q+\sqrt{q}+1$, which fits our assertion. Thus the only case we still have to consider is $k=\left\lfloor \sqrt{q}\right\rfloor+1$.

\subsubsection{Non-existence of $\cD$ with $k=\left\lfloor \sqrt{q}\right\rfloor+1$ and $|\PD| \leq q+\left\lfloor \sqrt{q}\right\rfloor-1$, $q\geq 27$}

Substituting $k=\left\lfloor \sqrt{q}\right\rfloor+1$ and $|\PD|\leq q+\left\lfloor \sqrt{q}\right\rfloor-1\leq q+2\sqrt{q}-k$ into \eqref{lbweird}, we get an improved version of \eqref{csodas}:
\begin{eqnarray}
0&\geq& w(\PD)\geq |\PD|^2-(kq+1)|\PD|+k(q^2+q+1-|\cD|) \nonumber\\ &\geq& (q+2\sqrt{q}-k)^2- (kq+1)(q+2\sqrt{q}-k)+ k(q^2+q+1- 2(q+\sqrt{q}+1)) \nonumber\\ 
&>& -2(q+2\sqrt{q}-k)-1 + (kq+1)+ (-3q-4\sqrt{q}-1)  \nonumber\\
&\geq& q\sqrt{q} - 4q-6\sqrt{q}-1, \label{csodas2}
\end{eqnarray}
a contradiction if $q\geq 27$.

All in all, we have to exclude the only remaining case $k=\left\lfloor \sqrt{q}\right\rfloor+1$ and $|\PD| = q+\left\lfloor \sqrt{q}\right\rfloor$. It will be handled by refining the estimate $0\geq w(\PD)$ .

\subsubsection{Non-existence of $\cD$ with $k=\left\lfloor \sqrt{q}\right\rfloor+1$ and $|\PD| = q+\left\lfloor \sqrt{q}\right\rfloor$, $q>81$ }

Substituting $k=\left\lfloor \sqrt{q}\right\rfloor+1$ into \eqref{csodas} yields 
\begin{equation}
w(\PD)\geq -3q-4\sqrt{q}-1. \label{weirdlower}
\end{equation}

We introduce the following
\begin{defi}\mbox{}
     For any line $\ell$, let the \emph{weight} of $\ell$ be defined as
\[w(\ell)=\left\{
\begin{array}{ll}
0 & \mbox{ if $\ell\cap\PD=\emptyset$,} \\
(|\ell\cap\PD|-1)(|\ell\cap\PD|-k) & \mbox{ otherwise.}
\end{array}
\right.\]
We call a line $\ell$ \emph{heavy} if $w(\ell)<0$ (these are exactly the secants which are neither tangents, nor maximal secants). 

If a line $\ell$ is fixed and $P\in\ell$, let 
$w_\ell(P)=\sum_{l\colon P\in l\neq\ell} w(l)$.
\end{defi}

This definition provides that for any line $\ell$, 
\begin{equation}
w( \PD) = w(\ell)+\sum_{P\in\ell}w_\ell(P). \label{lineweird}
\end{equation}
This enables us to study how a line, depending on how many points it has in $\PD$, effects $w(\PD)$, for which \eqref{weirdlower} is a lower bound. According to the following lemma, the length of a secant of $\PD$ can take on very few values only.

\begin{lemma} \label{egyfajta}
Suppose that $k=\left\lfloor \sqrt{q}\right\rfloor+1$ and $|\PD| = q+\left\lfloor \sqrt{q}\right\rfloor$, $q>81$. Then every line intersects $\PD$ in $0$, $1$, $k-1$ or $k$ points.
\end{lemma}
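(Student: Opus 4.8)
The plan is to argue by contradiction: suppose some line $\ell$ meets $\PD$ in $t$ points with $2\le t\le k-2$, and show that $\PD$ is then forced to carry strictly more negative weight than the lower bound \eqref{weirdlower} permits. Since every summand of $w(\PD)$ is non-positive, it is enough to exhibit a family of lines whose weights sum to less than $-3q-4\sqrt q-1$. The offending line itself is heavy, $w(\ell)=(t-1)(t-k)<0$, but a single secant is far too little, so the real content of the argument must be that a forbidden secant \emph{propagates} heaviness across many lines.

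Concretely, I would center the identity \eqref{lineweird} at $\ell$ and analyse the pencils through its points. Fix $P\in\ell$. The $q$ lines of $[P]\setminus\{\ell\}$ cover exactly the $q+\fsq-t$ points of $\PD$ lying off $\ell$, so the multiset of secant lengths in this pencil has a prescribed sum. Since the maximal secant length is $k$, a convexity/averaging estimate in the spirit of Lemma \ref{maxcover} controls $w_\ell(P)$, i.e.\ bounds how much negative weight the pencil at $P$ must carry. The exact value $|\PD|=q+\fsq$ is essential here: because $q+\fsq-t$ is typically not a multiple of $k-1=\fsq$ for $t$ in the forbidden range, the $q$ lines of such a pencil cannot all be tangents or full $k$-secants, which forces heavy lines into the pencil. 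The aim is then to aggregate these forced heavy lines over the points of $\ell$ — double counting being harmless, since two distinct points of $\ell$ share only $\ell$ — and thereby accumulate enough negative weight to undercut \eqref{weirdlower}, contradicting the hypothesis that $|\PD|=q+\fsq$.

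The main obstacle is quantitative rather than conceptual. A priori the heaviness forced in one pencil can be as small as a single $\fsq$- or $2$-secant of weight $-(\fsq-1)$, and one must show that across the whole configuration the total deficit exceeds the slack $3q+4\sqrt q+1$ available in \eqref{weirdlower}; making this aggregation tight enough to beat the explicit constant is where I expect the bulk of the work to lie, and it is exactly the place where the hypothesis $q>81$ is consumed. Two further subtleties must be confronted. First, the weight function cannot by itself separate the forbidden length $2$ from the allowed length $k-1$, since $(2-1)(2-k)$ and $(k-2)(k-1-k)$ both equal $-(k-2)$; hence at the short end a purely weight-based count is powerless, and a supplementary incidence argument (bounding the number of tangents forced through the points of a short secant) is needed. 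Second, when $q$ is not a perfect square the relevant divisibility of $q+\fsq-t$ by $\fsq$ can fail to hold for isolated values of $t$, so a single pencil need not be heavy; there one must play the pencils at several points of $\ell$ against one another, and carry the floor function $\fsq$ through the arithmetic instead of $\sqrt q$. This last point is the technically heaviest part of the proof.
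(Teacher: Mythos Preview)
Your overall frame is right—centre \eqref{lineweird} at a forbidden secant $\ell$ and accumulate enough negative weight to violate \eqref{weirdlower}—but the mechanism you propose for producing that weight cannot close the gap. The divisibility observation (that $q+\fsq-t$ is typically not a multiple of $\fsq$) only bites when $P\in\PD\cap\ell$: for $P\in\ell\setminus\PD$ the pencil through $P$ may contain skew lines, and then there is no arithmetic obstruction to all non-skew lines being tangents or $k$-secants. But $|\PD\cap\ell|=t\le k-2<\sqrt q$, so even if one heavy line of weight $-(k-2)$ is forced through each such $P$, together with $w(\ell)$ itself you assemble at most about $(k-2)^2+(k-1)^2/4\approx\tfrac54 q$ of negative weight, well short of the $3q+4\sqrt q+1$ you need. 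Iterating on the new heavy lines has no clear reason to converge with a usable bound.

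The paper works instead with the roughly $q$ points of $\ell\setminus\PD$, and the key idea you are missing is a bound on how many of them can lie on a $k$-secant. If $P\in\ell\setminus\PD$ is on a $k$-secant, then the count $|\PD|\ge m+k+(q-1-s(P))$ forces at least $m$ skew lines through $P$; since skew lines through distinct points of $\ell$ are distinct and all lie in $\LD$, at most $|\LD|/m$ points of $\ell\setminus\PD$ are on a $k$-secant. The remaining $\ge\frac{q-\sqrt q-4}{2}$ points of $\ell\setminus\PD$ are on no $k$-secant, and for each such $P$ the $q$ lines of $[P]\setminus\{\ell\}$ must cover $|\PD|-m>q$ points with secant lengths in $\{0,1,\dots,k-1\}$, which forces a heavy line and hence $w_\ell(P)\le-(k-2)$. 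This gives $w(\PD)\le -\frac{q-\sqrt q-2}{2}(\fsq-1)$, contradicting \eqref{weirdlower} once $q>81$. Note that this sidesteps both subtleties you flagged: there is no divisibility or floor-versus-root issue, and the argument never needs to distinguish a $2$-secant from a $(k-1)$-secant.
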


\begin{proof}
Let $\ell$ be an $m$-secant, $2\leq m \leq k-2$.
Let $P\in\ell\setminus\PD$. Suppose that $P$ is covered by a $k$-secant. Let $s(P)$ be the number of skew lines to $\PD$ on $P$. Then $|\PD|\geq m+k+(q-1-s(P))$, so $s(P)\geq m+k+q-1-|\PD|=m$. Let $t$ be the number of points in $\ell\setminus\PD$ that are covered by a $k$-secant. As all skew lines are in $\LD$, we see that $t\leq |\LD|/m$. Note that $|\LD|/m+m \leq |\LD|/2+2$ whenever $2\leq m \leq |\LD|/2$ (recall that $q\leq |\LD|\leq q+\sqrt{q}+2$), thus at least 
\[q+1-m- |\LD|/m \geq \frac{q-\sqrt{q}-4}{2}\] 
points of $\ell\setminus\PD$ are not covered by any $k$-secant.

Suppose now that $P$ is not covered by any $k$-secant. As $|\PD| > q + m $, there must be another heavy line on $P$ besides $\ell$, so $ -(k-2)\geq w_\ell(P)$. Also, $-(k-2)\geq w(\ell)=(m-1)(m-k)$ which, by \eqref{lineweird} yields
\[-\left(\frac{q-\sqrt{q}-4}{2}+1\right)(k-2) \geq w(\PD)\]
hence \eqref{weirdlower} implies that
\[-\frac{q-\sqrt{q}-2}{2}(\left\lfloor \sqrt{q}\right\rfloor-1) \geq -3q-4\sqrt{q}-1,\]
a contradiction for $q > 81$.
\end{proof}

According to Lemma \ref{egyfajta}, we call a line $\ell$ a \emph{long secant} if $|\ell\cap\PD|\geq \left\lfloor \sqrt{q}\right\rfloor$. In our final step, we resolve the case when  $|\PD|=q+\left\lfloor \sqrt{q}\right\rfloor=q+k-1$ and every long secant of $\PD$ is of size $\left\lfloor \sqrt{q}\right\rfloor$ or $\left\lfloor \sqrt{q}\right\rfloor+1$.

We call a point of type $(\alpha, \beta)$ if  $\alpha$ is the number of $k$-secants and $\beta$ is the number  of $k-1$ secants through it. Then
 $\alpha(k-1)+\beta(k-2)+1=|\PD|=q+k-1$ holds. 
 The non negative solutions of this Diophantine equation are of form $\{(\alpha_0-t(k-2), \beta_0+t(k-1)), t\in \mathbb{N}\}$. Since  $\beta\in [0,\frac{q+k-2}{k-2}]\cap \mathbb{Z}$, we have at most two types of points in $\PD$; more precisely, $\beta_0>4$ implies that every point is incident to the same number of $k$-secants and $(k-1)$-secants (if $q\geq 36$).

Let  $a$ and $b$ denote the number of points of type $(\alpha_0,\beta_0)$ and of type $(\alpha_0-(k-2),\beta_0+(k-1))$, respectively. Note that $a+b=q+k-1$. We can determine the number $N$ of $k$-secants and number $N'$ of $k-1$-secants with a simple double counting argument: 

\begin{eqnarray}
N=\frac{a\alpha_0+b(\alpha_0-k+2)}{k} \ \ \mbox{ and } \ \  N'=\frac{a\beta_0+b(\beta_0+k-1)}{k-1}.\label{szeloszam}
\end{eqnarray}

The number $M$ of exterior points is $q^2+q+1-|\PD|=q^2-k+2$.

Let $p_i$ denote the number of long secants through the $i$th exterior point. Then the (dual) standard equations give 

\begin{eqnarray}\sum_{i=1}^M p_i =N(q+1-k)+N'(q+1-k+1)\label{st1}\end{eqnarray} 
and 
\begin{eqnarray}\sum_{i=1}^M \binom{p_i}{2} = \binom{N+N'}{2}-a\binom{\alpha_0+\beta_0}{2}-b\binom{\alpha_0+\beta_0+1}{2}.\label{st2}\end{eqnarray} 

Either there exists an exterior point $P$ incident with at least three long secants, or $p_i\in \{1, 2\}$ $\forall i\in \{1, \ldots, M\}$. In the former case, let $\ell$ be a skew line to $\PD$  on $P$. Then, as each non-secant line through $P$ and each point of $\ell$ must be dominated, we obtain $|\cD|\geq 3(k-1)+(q-3)+q > 2(q+\lfloor\sqrt{q}\rfloor+1)$ if $q>25$, a contradiction.

Otherwise, consider the expression \[\sum_{i=1}^M (p_i-1)(p_i-2).\]

On the one hand, it is zero according to the assumption; on the other hand, it can be evaluated applying the standard equations to obtain

\begin{eqnarray}
0=\sum_{i=1}^M (p_i-1)(p_i-2)= 2\sum_{i=1}^M \binom{p_i}{2}-2\sum_{i=1}^M p_i+2M.\label{standard}
\end{eqnarray}

The right hand side of (\ref{standard}) can be viewed as a function \[F(q,k,\beta_0, b)=2\sum_{i=1}^M \binom{p_i}{2}-2\sum_{i=1}^M p_i+2M\] of $q$, $k$, $\beta_0$ and $b$. We are about to show that $\eqref{standard}$ has no feasible solutions if $q$ is large enough, where feasibility means that $\sqrt{q}<k\leq \sqrt{q}+1$, $0\leq \beta_0\leq \frac{q+k-2}{k-2}$, $0\leq b\leq q+k-1$, which we always assume in the sequel; moreover, all these parameters are integers.

First we show that if $\beta_0\geq 2$ and $q$ is large enough, then $F(q,k,\beta_0,b)<0$, hence $\eqref{standard}$ has no feasible solutions even among real numbers.

Suppose first that $b=0$. Then the coefficient of the leading term of $F(q,k,\beta_0,0)\cdot k^2(k-1)^2$ in $q$ is increasing in $k$ on the interval $[\sqrt{q},\sqrt{q}+1]$; for $k=\sqrt{q}+1$, it is $(3-2\beta_0)$ which, if $q$ is large enough, indeed yields $F(q,k,\beta_0,0)<0$ as $\beta_0\geq 2$. 
To keep the bound on $q$ sufficiently low, applying formal and numerical calculations it can be shown that $F(q,k,\beta_0,0)$ is convex in $k$ on the interval $[\sqrt{q},\sqrt{q}+1]$ for any fixed $\beta_0\geq 0$ (if $q\geq 60$), and it is decreasing in $\beta_0$ on the interval $[0,\frac{q+\sqrt{q}-2}{\sqrt{q}-2}]$ for any fixed $k\in[\sqrt{q},\sqrt{q}+1]$ (if $q\geq 126$). As for any $q\geq 21$, $F(q,\sqrt{q},2,0)$ and $F(q,\sqrt{q}+1,2,0)$ are both negative, we conclude that if $\beta_0\geq 2$ and $q\geq 126$, then $F(q,k,\beta_0,0)<0$, hence there is no feasible solution for \eqref{standard} with $b=0$.

Now suppose $b>0$, which implies $\beta_0\leq 4$. One can calculate that in this case, if $q\geq 130$, $F(q,k,\beta_0,b)\leq F(q,k,\beta_0,0)$, where the latter one is negative as we have seen before, hence \eqref{standard} has no feasible solutions with $b\geq 0$ if $\beta_0\geq 2$ and $q\geq 130$.

Now it remains to handle the the cases (I) $\beta_0=0$ and (II) $\beta_0=1$.
Taking into consideration the integer values of (\ref{szeloszam}) and the equation $\alpha_0(k-1)=q$ (case (I)) or $\alpha_0(k-1)=q+k-2$ (case (II)), these conclude to the following possibilities:

\begin{itemize}
\item[(I.a)] $\alpha_0=k-1, \ \beta_0=1, \ k=\sqrt{q}+1$,\ \ \ moreover $k\mid 2b$ and $N'=b+k$
\item[(I.b)] $\alpha_0=k, \ \beta_0=1, \  k(k-1)=q$, \ \ \ \ \ moreover $ k \mid 2b$  and $N'=b+k+1$
\item[(I.c)] $\alpha_0=k+1, \ \beta_0=1, \ k^2-1=q$, \ \ \ moreover $ k \mid 2b-2$  and $N'=b+k+2$
\item[(II.a)] $\alpha_0=k, \ \beta_0=0, \ k^2-2k+2=q$, \ \ moreover $ k \mid 2b$   and $N'=b$
\item[(II.b)] $\alpha_0=k+1, \ \beta_0=0, \ q=k^2-k+1$, moreover $ k \mid 2b-2$  and $N'=b$.
\end{itemize}

Solving (\ref{standard}) with these parameters, only (I.a) and (II.a) give feasible integer solutions for $b\in [0, q+k-1]$, namely $b=k/2$ and $b=k$, respectively (if $q\geq 21$). These cases can be excluded by combinatorial reasoning: $b\geq 2$ implies that there exist $2$ points covering at least $2(k-1)-1$ $(k-1)$-secants together, which is larger than $N'= \frac{3}{2}k$ (case (I.a)) or $N'=k$ (case (II.a)) (if $q\geq 25$).

An exhaustive search for integer feasible solutions of \eqref{standard} in case of $q\leq 130$ shows that if $q\geq 30$, then every such solution comes with $\beta_0=0$ or $\beta_1=1$, so they can be excluded by the above argument. All in all, we obtain that if $q>81$, then \eqref{standard} has no integer feasible solutions that can be realized with a dominating set.

\section{Dominating sets of $\PG(2,q)$}

In this section, using strong results on blocking sets of Desarguesian projective planes, we strongly improve the general Theorem \ref{main}. Roughly we show that if a (not necessarily primal) dominating set of $\PG(2,q)$ is significantly smaller than $3q$, then it must (almost) contain a blocking or a covering set. Let us first collect the results that have a key role in this section.

\begin{result}[Jamison \cite{J}, Brouwer--Schrijver \cite{BS}]\label{Jamison}
A blocking set in a Desarguesian affine plane of order $q$ has at least $2q-1$ points.
\end{result}

The above result is easily seen to be equivalent with the following.

\begin{result}\label{tangents}
Let $\cB$ be a blocking set of $\PG(2,q)$, and let $P$ be an essential point for $\cB$. Then there are at least $2q+1-|\cB|$ tangents to $\cB$ through $P$.
\end{result}

\begin{result}[Blokhuis \cite{Blokhuis}]\label{Blokhuis}
A blocking set in $\PG(2,q)$, $q$ prime, has at least $\frac32(q + 1)$ points, or contains a line.
\end{result}

\begin{result}[Blokhuis--Storme--Sz\H{o}nyi \cite{BSSz}]\label{BSSz}
Let $\cB$ be a blocking set in $\PG(2,q)$, $q=p^h>16$, $p$ prime, that contains neither a line nor a Baer subplane. Let $c_2=c_3=2^{-1/3}$ and $c_p=1$ for $p>3$. Then $|\cB|\geq q+c_pq^{2/3}+1$.
\end{result}

Note that the main result of \cite{BSSz} is more general as it concerns multiple blocking sets, but we do not need it here. 

\begin{result}[Sz\H{o}nyi--Weiner \cite{SzW-sb}]\label{stbl2}
Let $\cB$ be a set of points in $\PG(2,q)$, $q=p^h$, $h\geq 2$. Denote the number of skew lines to $\cB$ by $\delta$, and suppose that $\delta\leq \frac{1}{100}pq$. Assume that $|\cB|<\frac32(q+1-\sqrt{2\delta})$. Then $\cB$ can be extended to a blocking set by adding at most
\[\frac{\delta}{2q+1-|\cB|} + \frac{1}{100}\]
points to it.
\end{result}

\begin{result}[Sz\H{o}nyi--Weiner \cite{SzW-sbp}]\label{stblp}
Let $\cB$ be a set of points of $\PG(2,q)$, $q=p$ prime, with at most $\frac32(q+1)-\varepsilon$ points. Suppose that the number $\delta$ of skew lines to $\cB$ is less than $\left(\frac23(\varepsilon+1)\right)^2/2$. Then there is a line that contains at least $q-\frac{2\delta}{q+1}$ points of $\cB$.
\end{result}

A \emph{partial cover} of $\PG(2,q)$ with $h>0$ \emph{holes} is a set of lines in $\PG(2,q)$ such that the union of these lines covers all but exactly $h$ points. We will also use the dual of the following result due to Blokhuis, Brouwer and Sz\H{o}nyi \cite{covering}.

\begin{result}[\cite{covering}]\label{holes}
A partial cover of $\PG(2, q)$ with $h > 0$ holes, not all on one line if $h>2$, has size at least $2q-1-h/2$.
\end{result}

Now we proceed with examining small dominating sets of $\PG(2,q)$. If $\cD$ is a primal dominating set, then Theorem \ref{main} can be refined in the following way.

\begin{theorem}\label{primaldes}
Let $\cD$ be a minimal primal dominating set of $\PG(2,q)$, $q>16$. Define $c_p$ as in Result \ref{BSSz}. Then we have one of the cases (i)--(v) of Theorem \ref{main} with the following additions:
\begin{itemize}
\item{case (b) of (iv) is not possible (nor its dual);}
\item{if $|\cD|<3q-1$, then $\cD$ is stable;}
\item{if $3q-1>|\cD|>2q+\sqrt{q}+1$, then either (a) $|\cD| = 2q+\sqrt{q}+2$, and equality holds if and only if there is a Baer subplane $\Pi'$ and a point $P\notin\Pi'$ such that $\LD=[P]$ and $\PD=\Pi'$ (or $\cD$ is the dual structure), or (b) $|\cD|\geq 2q+c_pq^{2/3}+1$.}
\end{itemize}
\end{theorem}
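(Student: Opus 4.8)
The plan is to reuse the structural skeleton from the primal part of Theorem~\ref{main} --- namely Lemmas~\ref{minq1} and \ref{minq2} --- and to feed into it the sharper Desarguesian estimates on blocking sets, Results~\ref{Jamison}, \ref{tangents}, \ref{Bruen} and \ref{BSSz}. For a primal $\cD$ of size below $3q-1$ there is (up to duality) a point $P$ with $|[P]\cap\LD|\in\{q,q+1\}$; the value $q$ gives case~(i) and a stable set by Lemma~\ref{minq1}, so everything reduces to $\LD\supseteq[P]$, where $\PD\cup\{P\}$ is a blocking set and $\cD$ is a blocking set together with a full pencil.

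First I would show that no line outside the pencil survives when $|\cD|<3q-1$, which simultaneously rules out case~(iv)(b). If $\LD\supseteq[P]$ contains $r\geq 1$ further lines, then by minimality they are skew to $\PD$ and avoid $P$, and $\PD\cup\{P\}$ blocks every line except those $r$ lines; completing it to an affine blocking set of $\Pi_q$ minus one of them and invoking the Jamison--Brouwer--Schrijver bound (Result~\ref{Jamison}) gives $|\PD|\geq 2q-1-r$, while $|\LD|=q+1+r$, so $|\cD|\geq 3q$. Hence the affine configuration of case~(iv)(b) (and its dual) always has size at least $3q$ and therefore belongs to case~(v), never to the range of~(iv). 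Consequently, for $|\cD|<3q-1$ we have $\LD=[P]$ exactly, and $\cB:=\PD\cup\{P\}$ is a projective blocking set with $|\cB|=|\cD|-q\leq 2q-2$, all of whose points are essential except possibly $P$.

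Next I would prove stability for every $|\cD|<3q-1$. Since $\LD=[P]$ already covers all points, the points of $\PD$ only block lines, and a line off $P$ is dominated solely if it meets $\PD$. Because $|\cB|\leq 2q-2$, Result~\ref{tangents} shows that each essential point $Q\in\PD$ lies on at least $2q+1-|\cB|\geq 3$ tangents of $\cB$, none of which passes through $P$; these are tangents of $\PD$ lying outside $\LD$. I would then check that no destabilizing step --- adding one vertex and deleting at least two --- can preserve domination. Adding a line blocks no further line, so deleting any point of $\PD$ strands at least two of its tangents; adding a point $X$ blocks only the lines through $X$, so again at most one tangent per deleted point is rescued; and a pencil line can be removed only if some line through $P$ carries $q-1$ points of $\PD$, a primal degeneracy treated separately. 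I expect this case analysis --- over which pair of vertices is deleted (two points, a point and a line, or two lines) under line- versus point-addition, and over the dense-pencil-line degeneracy --- to be the main obstacle of the proof, more for its bookkeeping than for any single inequality.

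Finally, with stability secured and $\LD=[P]$, I would read off the size dichotomy in the range $2q+\sqrt q+1<|\cD|<3q-1$, equivalently $q+\sqrt q+1<|\cB|<2q-1$, by distinguishing whether $P$ is essential for $\cB$. If $P$ is essential, then $\cB$ is a minimal blocking set that is neither a line ($|\cB|>q+1$) nor a Baer subplane ($|\cB|>q+\sqrt q+1$), so Result~\ref{BSSz} forces $|\cB|\geq q+c_pq^{2/3}+1$ and hence case~(b). If $P$ is not essential, then $\PD$ itself is a minimal blocking set: when $\PD$ is a Baer subplane $\Pi'$ with $P\notin\Pi'$ we get $\LD=[P]$, $\PD=\Pi'$ and $|\cD|=2q+\sqrt q+2$, which is case~(a); otherwise $\PD$ is minimal, nontrivial and non-Baer, and Result~\ref{BSSz} again gives case~(b). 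Since, by Results~\ref{Bruen} and \ref{BSSz}, no minimal blocking set of $\PG(2,q)$ has size strictly between $q+\sqrt q+1$ and $q+c_pq^{2/3}+1$, the value $2q+\sqrt q+2$ is realised exactly by the Baer-plus-external-point configuration, which yields the ``equality iff'' in~(a); the dual statements follow from the point--line duality used throughout.
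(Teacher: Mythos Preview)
Your proposal is correct, and for the exclusion of case~(iv)(b) and for the size dichotomy it follows essentially the paper's line: feed Jamison's bound (Result~\ref{Jamison}) and the Blokhuis--Storme--Sz\H{o}nyi bound (Result~\ref{BSSz}) into the structural picture coming from Lemmas~\ref{minq1}--\ref{minq2}.

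The stability argument, however, takes a genuinely different route. The paper argues by reduction: assuming a smallest non-stable minimal primal $\cD$, it performs the destabilizing move to obtain a smaller stable primal $\cD'$ (primality of $\cD'$ having been established in the proof of Theorem~\ref{main}), and then counts the tangents to $\PD'$ through the added vertex $Q$ via Result~\ref{tangents} to force $|\cD|\geq 3q-1$. You instead first pin down $\LD=[P]$ from minimality alone using Jamison --- a step the paper cannot take in the general setting of Lemma~\ref{minq2}, where only Result~\ref{affin} is available --- and then verify directly that no single added vertex can compensate for the removal of two, using that every point of $\PD$ carries at least three tangents to $\cB$ avoiding $P$. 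Your route is more self-contained, since it does not lean on the ``$\cD'$ is primal'' step buried in the proof of Theorem~\ref{main}, at the cost of the case-by-case bookkeeping you anticipate; the paper's route is shorter once that machinery is in place. One terminological wrinkle: a pencil line carrying $q-1$ points of $\PD$ is not literally a ``primal degeneracy''. The observation you actually need there is that removing \emph{two} pencil lines after adding one vertex would force two lines through $P$ each meeting $\PD$ in at least $q-1$ points, hence $|\PD|\geq 2(q-1)$ and $|\cD|\geq 3q-1$; with that in hand, the remaining cases (at least one deleted vertex is a point) are settled by the three-tangents argument exactly as you outline.
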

\begin{proof}
Throughout we suppose that $|\cD|<3q-1$. Suppose first that $|\cD|>2q+\sqrt{q}+1$. Then we have case (iv) of Theorem \ref{main}. Assume that $\cD$ is stable. Let $\cB$ be the minimal blocking set of cases (a) or (b). In case (a), either $\cB$ is a Baer subplane, or $|\cB|\geq q+c_pq^{2/3}+1$. In the former case, $|\cD|>2q+\sqrt{q}+1$ yields $P\notin\cB$, whence $|\cD|=2q+\sqrt{q}+2$ and the assertion follows. In the latter case, $|\cD|\geq q+1 + |\cB|-1 > 2q+c_pq^{2/3}+1$. In case (b), by Result \ref{affin}, $|\cB|\geq 2q-1$, so $|\cD|\geq q+2+|\cB|-1=3q$.

Now we claim that there cannot be a non-stable primal dominating set of size less than $3q-1$. To prove this, we follow the steps and notation of the proof of the respective part of Theorem \ref{main} (page \pageref{proofmain}). By duality we may assume that $v=Q$ is a point. Then $|\cD|\geq |\PD'|-1+|\LD'|+|t(Q)\setminus\LD'|$, where $t(Q)$ is the set of tangents to $\PD'$ through $Q$. By Result \ref{tangents}, $|t(Q)|\geq 2q+1-|\PD'|$. The details are left for the reader.
\end{proof}

For $q\leq 16$, some weaker results prior to \ref{BSSz}, see \cite[Section 5]{BBl2bl}, provide useful bounds on blocking sets that could be applied in the proof. Not to get lost in details, we omitted including these. It follows, however, that already for $q\geq9$, $|\cD|=2q+\sqrt{q}+2$ is possible if and only if $\cD$ is the union of a Baer subplane and a full pencil (or the dual structure).

Next we treat non-primal dominating sets. The next proposition can be regarded as a strengthening of Proposition \ref{nagyszelo}. Recall that $c$ is the maximal number of concurrent lines in $\LD$.

\begin{proposition}\label{pg2q}
Let $\cD=(\PD,\LD)$ be a non-primal dominating set in $\PG(2,q)$, $q=p^h$, $p$ prime. If $|\cD|+|\PD|\leq 4q-3$, then $\PD$ is a blocking set. Dually, if $|\cD|+|\LD|\leq 4q-3$, then $\LD$ is a covering set.
\end{proposition}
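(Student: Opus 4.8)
The plan is to exploit duality---dominating set, non-primality, and the blocking/covering dichotomy are all self-dual notions and $\PG(2,q)$ is self-dual---so that it suffices to treat the case $|\cD|+|\PD|\le 4q-3$ and conclude that $\PD$ is a blocking set. First I record the one consequence of the hypothesis that I will lean on: since $\cD$ is non-primal and $|\cD|+|\PD|\le 4q-3$, Proposition \ref{nagyszelo} applies and gives $k\le |\PD|-q+1$, so $\PD$ has only short secants. Now I would argue by contradiction: suppose $\PD$ is \emph{not} a blocking set and let $s\ge 1$ be the number of lines skew to $\PD$. Because $\cD$ dominates every line, each skew line must belong to $\LD$; hence $s\le|\LD|$ and consequently $2|\PD|+s\le 2|\PD|+|\LD|=|\PD|+|\cD|\le 4q-3$. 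The proof then splits according to whether the $s$ skew lines are concurrent.

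If the skew lines are \emph{not} all through one point (which is automatic when $s\le 2$), then I would apply the dual of Result \ref{holes} to the point set $\PD$, with its skew lines in the role of the holes, obtaining $|\PD|\ge 2q-1-s/2$. Comparing this with the bound $|\PD|\le (4q-3-s)/2=2q-\tfrac32-\tfrac{s}{2}$ derived above yields $2q-1-\tfrac{s}{2}\le 2q-\tfrac32-\tfrac{s}{2}$, i.e.\ $-1\le-\tfrac32$, a contradiction. This disposes of the generic situation in one stroke.

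The remaining case, which I expect to be the crux, is when all $s$ skew lines pass through a single point $R$; here the dual of Result \ref{holes} is unavailable (its hypothesis fails), and this is precisely where the Desarguesian input must enter. Since every line missing $\PD$ passes through $R$, the set $\PD\cup\{R\}$ is a blocking set for which $R$ is essential (the skew lines become unblocked upon deleting $R$), so Result \ref{tangents} forces the number of tangents through $R$---which is exactly $s$---to satisfy $s\ge 2q+1-(|\PD|+1)=2q-|\PD|$. Writing $d=|[R]\cap\LD|\ge s$ and invoking non-primality ($d\le q-1$), I would then make $\LD$ pay for the covering: at least one of the $q+1-d\ge 2$ lines $m$ through $R$ that lie outside $\LD$ is blocked by $\PD$ and hence carries at least $q-k\ge 2q-|\PD|-1$ points different from $R$ and outside $\PD$, and these points can be covered only by lines of $\LD$ \emph{not} through $R$ (the pencil at $R$ meets $m$ solely at $R$). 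As each such line meets $m$ at most once, the needed points lie on distinct lines of $\LD\setminus[R]$, giving $|\LD|-d\ge q-k$ and therefore $|\LD|\ge d+q-k\ge (2q-|\PD|)+q-(|\PD|-q+1)=4q-2|\PD|-1$; hence $|\cD|+|\PD|=2|\PD|+|\LD|\ge 4q-1$, contradicting the hypothesis. The main obstacle is exactly this concurrent subcase: the point-set bound degenerates, the $d$ lines of $\LD$ through $R$ overlap there and cover wastefully (so any estimate of the form ``$q+1$ fresh points per line'' is hopelessly weak), and the resolution is to combine the affine blocking-set bound supplied by Result \ref{tangents} with a covering count organized around the pencil at $R$.
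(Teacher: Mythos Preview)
Your proof is correct and follows essentially the same route as the paper's: both split via the dual of Result~\ref{holes}, and in the concurrent-holes case both invoke Result~\ref{tangents} at the special point to force at least $2q-|\PD|$ skew lines (hence lines of $\LD$) through it. The only divergence is the finish of that case: the paper combines $c\ge 2q-|\PD|$ with Proposition~\ref{nagyszelo} and Proposition~\ref{minqq} to get $2|\PD|\ge 3q-1$ and $|\LD|\ge q-1$, whereas you use $k\le|\PD|-q+1$ together with a direct covering count along a line $m\in[R]\setminus\LD$ to obtain $|\LD|\ge 4q-2|\PD|-1$; both yield $|\cD|+|\PD|\ge 4q-2$ (yours even gives $4q-1$).
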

\begin{proof}
Suppose that $|\cD|+|\PD|\leq 4q-3$. If $\PD$ blocks all but exactly $0<h\leq|\LD|$ lines, then the dual of Result \ref{holes} yields two options. The first option is $|\PD|\geq 2q-1-|\LD|/2$, which gives $|\cD|+|\PD|\geq 4q-2$, a contradiction. The second option is that all the $h$ lines not blocked by $\PD$ go through one point, say, $P$. But then $\PD\cup\{P\}$ is a blocking set in $\PG(2,q)$, for which $P$ is an essential point. Then there are at least $2q-|\PD|$ (Result \ref{tangents}) skew lines to $\PD$ through $P$. This means $c\geq 2q-|\PD|$. On the other hand, Proposition \ref{nagyszelo} gives $c\leq |\PD|-q+1$ (under $c<q$, so if $\cD$ is not a primal dominating set). These give $2|\PD|\geq 3q-1$. By Proposition \ref{minqq}, $4q-3 \geq |\cD|\geq q^2+q-(q-1)|\LD|$, whence $|\LD|\geq q-1$ follows, so $|\cD|+|\PD|\geq 4q-2$, a contradiction. Thus we conclude that $\PD$ is a blocking set.
\end{proof}

\begin{theorem}\label{thmpg2q}
Let $\cD$ be a non-primal dominating set in $\PG(2,q)$, $q=p^h$, $p$ prime.
\begin{itemize}
\item If $|\cD|\leq (5q-3)/2$, then $\PD$ is a blocking set and $\LD$ is a covering set. 
\item If $|\cD|\leq (8q-6)/3$, then either $|\PD|\leq |\LD|$ and $\PD$ is a blocking set, or $|\LD|\leq |\PD|$ and $\LD$ is a covering set. 
\item If $|\cD|\leq 3(q+1-2\sqrt{q})$, $p\geq 200$, and $h\geq2$, then either $|\PD|\leq|\LD|$ and $\PD$ can be extended to a blocking set by adding at most three points to it, or $|\LD|\leq|\PD|$ and $\LD$ can be extended to a covering set by adding at most three lines to it.
\end{itemize}
\end{theorem}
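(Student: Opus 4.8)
The plan is to deduce all three bullets from Proposition~\ref{pg2q}, with the third one also calling on the Sz\H{o}nyi--Weiner stability result (Result~\ref{stbl2}); beyond that the work is purely size bookkeeping. The one fact I would lean on repeatedly is that, by Proposition~\ref{minqq}, both $|\PD|\ge q$ and $|\LD|\ge q$ whenever $|\cD|<3q-1$, which holds in every case treated here.

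For the first bullet I would note that $|\cD|\le (5q-3)/2<3q-1$, so $|\LD|\ge q$ by Proposition~\ref{minqq}, whence $|\PD|=|\cD|-|\LD|\le |\cD|-q\le (3q-3)/2$ and therefore $|\cD|+|\PD|\le (5q-3)/2+(3q-3)/2=4q-3$. Proposition~\ref{pg2q} then makes $\PD$ a blocking set, and the symmetric estimate using $|\PD|\ge q$ gives $|\cD|+|\LD|\le 4q-3$, so $\LD$ is a covering set. For the second bullet, by duality I may assume $|\PD|\le|\LD|$, so that $|\PD|\le |\cD|/2\le (4q-3)/3$ and hence $|\cD|+|\PD|\le (8q-6)/3+(4q-3)/3=4q-3$; Proposition~\ref{pg2q} again forces $\PD$ to be a blocking set, which is the asserted alternative.

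The substantive case is the third bullet, which I would run through Result~\ref{stbl2} applied to $\PD$, assuming $|\PD|\le|\LD|$ by duality. Since every line skew to $\PD$ must lie in $\LD$, the number $\delta$ of skew lines satisfies $\delta\le|\LD|=|\cD|-|\PD|\le|\cD|-q\le 2q+3-6\sqrt{q}<2q$, where I used $|\PD|\ge q$. The two hypotheses of Result~\ref{stbl2} are then immediate: as $p\ge 200$ we have $\delta<2q\le\frac{1}{100}pq$; and since $\delta<2q$ yields $\sqrt{2\delta}<2\sqrt{q}$, we get $|\PD|\le|\cD|/2\le\frac32(q+1-2\sqrt{q})<\frac32(q+1-\sqrt{2\delta})$. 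Thus $\PD$ extends to a blocking set by adding at most $\frac{\delta}{2q+1-|\PD|}+\frac{1}{100}$ points, and it remains to show this number is below $4$, so that (being an integer) at most three points are added. I would bound $\frac{\delta}{2q+1-|\PD|}\le\frac{|\cD|-|\PD|}{2q+1-|\PD|}$ and observe that as a function of $|\PD|$ on $[q,\,|\cD|/2]$ this quotient is monotone (its derivative has the constant sign of $|\cD|-2q-1$), so its maximum is attained at an endpoint: at $|\PD|=q$ the value is $\frac{|\cD|-q}{q+1}<2$, while at $|\PD|=|\cD|/2$ it equals $\frac{|\cD|}{4q+2-|\cD|}\le\frac{3q+3-6\sqrt{q}}{q-1+6\sqrt{q}}$, which an elementary manipulation shows stays below $4-\frac{1}{100}$ for all admissible $q$. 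The dual argument then handles the covering-set alternative.

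The hard part is exactly this final estimate: one must pin the extension count down to \emph{at most three} uniformly over the whole feasible range of $|\PD|$ and $|\cD|$. The clean way to do so is to bound the skew-line count by $\delta\le|\LD|\le|\cD|-q$ and then exploit the monotonicity of the Sz\H{o}nyi--Weiner bound to reduce the verification to its two endpoints; everything else reduces to the size arithmetic feeding Propositions~\ref{minqq} and~\ref{pg2q}.
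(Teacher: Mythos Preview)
Your proposal is correct and follows essentially the same route as the paper: both bullets one and two are reduced to Proposition~\ref{pg2q} via the size bookkeeping you describe (using Proposition~\ref{minqq} to get $|\PD|,|\LD|\ge q$), and the third bullet is handled by feeding $\delta\le|\LD|$ into Result~\ref{stbl2} and bounding the resulting fraction $\frac{|\cD|-|\PD|}{2q+1-|\PD|}$ by its value at $|\PD|=|\cD|/2\le\frac32(q+1-2\sqrt{q})$. Your explicit two-endpoint monotonicity check is slightly more careful than the paper's direct substitution of the worst case, but the argument and the final numerical bound $\frac{3q+3-6\sqrt{q}}{q-1+6\sqrt{q}}+\frac{1}{100}<4$ are the same.
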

\begin{proof}
Assume that $|\cD|\leq (5q-3)/2$. Then, as $|\LD|\geq q$ (Proposition \ref{minqq}), $|\PD|\leq |\cD|-q$ follows, hence $|\cD|+|\PD|\leq 4q-3$, thus Proposition \ref{pg2q} yields that $\PD$ is a blocking set. By duality, $\LD$ is a covering set.

Suppose that $|\cD|\leq (8q-6)/3$. By duality we may assume $|\PD|\leq|\LD|$. Then $|\PD|\leq |\cD|/2$, so $|\cD|+|\PD|\leq 3|\cD|/2\leq 4q-3$, thus Proposition \ref{pg2q} applies.

Suppose now that $|\cD|\leq 3(q+1-2\sqrt{q})$, $p\geq 200$, $h\geq 2$. As $|\PD|\geq q$, $|\LD|=|\cD|-|\PD|\leq 2q$. By duality we may assume $|\PD|\leq|\LD|$, so $|\PD|\leq \frac32(q+1-2\sqrt{q})\leq \frac32(q+1-\sqrt{2|\LD|})$. As $\frac1{100}pq\geq 2q\geq |\LD|$, Result \ref{stbl2} yields that $\PD$ can be extended to a blocking set by adding at most 
\[\frac{|\cD|-|\PD|}{2q+1-|\PD|} + \frac{1}{100}\leq \frac{\frac32(q+1-2\sqrt{q})}{2q+1-\frac32(q+1-2\sqrt{q})}+\frac1{100}=\frac{3q+3-6\sqrt{q}}{q-1+6\sqrt{q}}+\frac1{100}<4\]
points to it.
\end{proof}

\begin{theorem}\label{thmpg2p}
Let $\cD=(\PD\cup\LD)$ be a non-primal dominating set in $\PG(2,q)$, $q=p$ prime. Then 
\begin{itemize}
\item $|\cD|\geq (8q-5)/3$; 
\item if $|\cD|\leq 3q+3-6\sqrt{q}$, then either $|\PD|\leq |\LD|$ and $\PD$ contains $q-3$ collinear points, or $|\LD|\leq |\PD|$ and $\LD$ contains $q-3$ concurrent lines.
\end{itemize}
\end{theorem}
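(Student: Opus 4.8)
The plan is to handle the two bullets of Theorem~\ref{thmpg2p} separately, exploiting that for $q=p$ prime we have access to the strong lower bounds of Results~\ref{Blokhuis} and~\ref{stblp}, which are far better than the general bounds used in the non-prime case. Throughout I may assume, by duality, that $|\PD|\leq|\LD|$, so that $|\PD|\leq|\cD|/2$.

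For the first bullet, the idea is to argue by contradiction: suppose $|\cD|\leq(8q-6)/3$. Since $|\PD|\leq|\cD|/2$, we have $|\cD|+|\PD|\leq\frac32|\cD|\leq 4q-3$, so Proposition~\ref{pg2q} applies and $\PD$ is a blocking set with $|\PD|\leq|\cD|/2\leq(4q-3)/3<\frac32(q+1)$. By Result~\ref{Blokhuis}, a blocking set in $\PG(2,q)$ with $q$ prime that does not contain a line has at least $\frac32(q+1)$ points; since $|\PD|<\frac32(q+1)$, the blocking set $\PD$ must contain a full line $\ell$, i.e.\ all $q+1$ points of $\ell$ lie in $\PD$. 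But $k\leq q-1$ in the non-primal case (every line meets $\PD$ in at most $q-1$ points, by the standing assumption that $\cD$ is non-primal), so $\PD$ cannot contain a full line---a contradiction. Hence $|\cD|>(8q-6)/3$, and since $|\cD|$ is an integer this sharpens to $|\cD|\geq(8q-5)/3$.

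For the second bullet, I would again assume $|\PD|\leq|\LD|$ and show $\PD$ contains $q-3$ collinear points. First I check the hypotheses of Result~\ref{stblp}. From $|\cD|\leq 3q+3-6\sqrt{q}$ and $|\PD|\leq|\cD|/2$ we get $|\PD|\leq\frac32(q+1)-3\sqrt{q}=\frac32(q+1)-\varepsilon$ with $\varepsilon=3\sqrt{q}$. Since $|\cD|\leq 3q+3-6\sqrt{q}\leq(5q-3)/2$ for large enough $q$, the first bullet of Theorem~\ref{thmpg2q} (or directly Proposition~\ref{pg2q}) guarantees $\PD$ is a blocking set, so its number $\delta$ of skew lines is $0$. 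In particular $\delta<\left(\frac23(\varepsilon+1)\right)^2/2$ is satisfied, and Result~\ref{stblp} gives a line containing at least $q-\frac{2\delta}{q+1}=q$ points of $\PD$. That would say $\PD$ contains a full line, again contradicting $k\leq q-1$; so the more careful route is to not yet conclude $\PD$ is a blocking set but to bound $\delta=|\{\text{skew lines}\}|\leq|\LD|$ directly and feed this small $\delta$ into Result~\ref{stblp}. Since $\delta\leq|\LD|=|\cD|-|\PD|\leq 2q$ and $\varepsilon=3\sqrt{q}$, the hypothesis $\delta<\left(\frac23(3\sqrt q+1)\right)^2/2=2q+O(\sqrt q)$ holds for $q$ large, and Result~\ref{stblp} then produces a line with at least $q-\frac{2\delta}{q+1}\geq q-\frac{4q}{q+1}>q-4$, i.e.\ at least $q-3$, points of $\PD$.

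The main obstacle is the bookkeeping around the exact value of $\delta$ in the second bullet: one must verify that the number of skew lines is small enough to satisfy the rather tight numerical hypothesis of Result~\ref{stblp} while simultaneously keeping $|\PD|$ below $\frac32(q+1)-\varepsilon$, and the two constraints pull in opposite directions (making $\varepsilon$ large to force collinearity shrinks the allowed $\delta$). The clean way through is to observe that $\delta\leq|\LD|\leq|\cD|-q\leq 2q+3-6\sqrt q$ (using $|\PD|\geq q$ from Proposition~\ref{minqq}), which is comfortably below the threshold $2q+O(\sqrt q)$, and that $\frac{2\delta}{q+1}<4$ then yields the bound $q-3$ after rounding. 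A small bound on $q$ may be needed to make all these inequalities simultaneously valid, but the structure of the argument is exactly the prime-field specialization of the scheme used for Theorem~\ref{thmpg2q}, with Results~\ref{Blokhuis} and~\ref{stblp} replacing the square-order tools.
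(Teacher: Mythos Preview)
Your argument is essentially identical to the paper's: for the first bullet you use Proposition~\ref{pg2q} (equivalently the second bullet of Theorem~\ref{thmpg2q}) together with Blokhuis' Result~\ref{Blokhuis} to reach a contradiction, and for the second bullet you bound the number of skew lines by $\delta\leq|\LD|\leq|\cD|-q\leq 2q$ and apply Result~\ref{stblp} with $\varepsilon=3\sqrt{q}$ to obtain $q-\frac{2\delta}{q+1}>q-4$, hence at least $q-3$ collinear points. The only slip is in your abandoned detour: the inequality $3q+3-6\sqrt{q}\leq(5q-3)/2$ holds only for small $q$ (roughly $q\leq 125$), not for large $q$, but since you correctly discard that route this does not affect the final argument.
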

\begin{proof}
If $|\cD|\leq (8q-6)/3$, then Theorem \ref{thmpg2q} yields that, say, $|\PD|\leq|\LD|$, and $\PD$ is a blocking set. As $\cD$ is non-primal, $\PD$ must not contain a line, hence Blokhuis' celebrated Result \ref{Blokhuis} gives $|\PD|\geq\frac32(q+1)$, hence $|\cD|\geq 3(q+1)$, a contradiction.

Assume now $|\cD|\leq 3q+3-6\sqrt{q}$ and, by duality, $|\PD|\leq |\LD|$. Then $|\PD|\leq \frac32(q+1)-3\sqrt{q}$, and $|\LD|\leq 2q\leq \frac29(3\sqrt{q}+1)^2$, so Result \ref{stblp} applies with $\varepsilon=3\sqrt{q}$. Hence $\PD$ contains at least $q-4q/(q+1)>q-4$ collinear points.
\end{proof}

\begin{remark}
Some restrictions on $|\cD|$ are indeed needed in Theorem \ref{thmpg2q}. Let $\ell_0=\{P_0,P_1,\ldots,P_q\}$ be a line, let $[P_0]=\{\ell_0,\ell_1,\ldots,\ell_q\}$, and let $\ell_1=\{P_0,Q_1,Q_2,\ldots,Q_q\}$. Choose $1\leq t\leq q-1$. Let us define $\PD$ as $\{P_2,\ldots,P_q,Q_1,\ldots,Q_t\}$, and define $\LD$ as $\{\ell_2,\ldots,\ell_q,P_2Q_{t+1},\ldots,P_2Q_q\}$. Then $\PD\cup\LD$ is a dominating set of size $3q-2$, and neither $\PD$ is a blocking set, nor $\LD$ is a covering set.
\end{remark}

\begin{remark}
Theorem \ref{thmpg2q} does not remain valid in non-Desarguesian projective planes. E.g., there is a projective plane $\Pi_q$ (in fact, a Hall plane) of square order $q$ that contains Baer subplanes, and there is a suitable line $\ell\in\Pi_q$ such that there is an affine blocking set $\cB$ in $\Pi_q\setminus\ell$ of size $\lfloor 4q/3+5\sqrt{q}/3\rfloor$ \cite{DHSzV}. We may choose a dual Baer subplane $\cS$ of $\Pi_q$ that contains the line $\ell$. Then $\cB\cup\cS$ is a dominating set of $\Pi_q$ of size $\approx 7q/3<\frac52q$, yet its point set is not a blocking set of $\Pi_q$.
\end{remark}

\section{Final remarks and open problems}\label{fin}

We conjecture that $|\cD|= 2(q+ \left\lfloor \sqrt{q}\right\rfloor+1)$ in Theorem \ref{main} (vi) is only possible when $q$ is a square, $\PD$ forms a Baer subplane and $\LD$ forms a dual Baer subplane (if $q$ is large). More generally, it seems that if a small enough dominating set $\cD$ contains neither $q$ collinear points nor $q$ concurrent lines (i.e., it is not primal), then it must be the union of a blocking set and a covering set. However, the plain description of the case of equality in Theorem \ref{main} (iii) seems hard. It is closely related to our following conjecture.

\begin{conj}\label{conj}
Let $\Pi_q$ be an arbitrary projective plane of order $q$ ($q$ square, large enough), and suppose that $\Pi'$ is a Baer subplane of $\Pi_q$. If a line set $\cL$ of $\Pi_q$ of size $|\cL|\leq q+\sqrt{q}+1$ covers all points of $\Pi_q$ not in $\Pi'$, then $\cL$ is a covering set (so $\cL$ also covers the points of $\Pi'$).
\end{conj}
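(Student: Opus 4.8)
The plan is to argue by contradiction: suppose $\cL$ covers every point outside $\Pi'$, that $|\cL|\le q+\sqrt q+1$, yet some point $P\in\Pi'$ is uncovered. Then no line of $\cL$ passes through $P$. With respect to $\Pi'$, the $q+1$ lines through $P$ split into $\sqrt q+1$ secants (the lines of the subplane through $P$, each meeting $\Pi'$ in $\sqrt q+1$ points) and $q-\sqrt q$ tangents (each meeting $\Pi'$ only in $P$). Write $n=|\cL|$ and let $a$, $b$ denote the number of lines of $\cL$ that are tangents, resp.\ $(\sqrt q+1)$-secants, of $\Pi'$, so $a+b=n$; for a point $Q$ outside $\Pi'$ let $d(Q)$ be the number of lines of $\cL$ through $Q$, so that covering means $d(Q)\ge 1$.

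First I would extract strong structural information by a double count. Every line of $\cL$ meets each line through $P$ in a single point different from $P$. On a tangent $t$ through $P$ all $q$ points of $t\setminus\{P\}$ lie outside $\Pi'$, so they must be covered, and since the $n$ lines of $\cL$ cross $t$ in $n$ points of $t\setminus\{P\}$, the coverage excess along $t$ equals exactly $n-q$. As the $q-\sqrt q$ tangents through $P$ partition (apart from $P$) the outside points lying on them, summing gives
\[
\sum_{Q}\bigl(d(Q)-1\bigr)=(q-\sqrt q)(n-q),
\]
the sum being over outside points on tangents through $P$. On the other hand the total number of incidences of $\cL$ with the $M=q^2-\sqrt q$ outside points is $aq+b(q-\sqrt q)=nq-b\sqrt q$, so the total coverage excess is $nq-b\sqrt q-(q^2-\sqrt q)$. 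Subtracting the tangent contribution, the excess concentrated on the outside points of the secants through $P$ equals $\sqrt q\,(a-q+1)$, which being non-negative forces $a\ge q-1$, and hence $b\le\sqrt q+2$. Thus $\cL$ is essentially a set of tangent lines of $\Pi'$ avoiding $P$, with only a handful of secants.

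Next I would reinterpret $\cL$ as a partial cover of $\Pi_q$ whose set of holes $H$ satisfies $P\in H\subseteq\Pi'$, so $1\le|H|\le q+\sqrt q+1$. In $\PG(2,q)$ the dual of Result~\ref{holes} applies: if the holes are not all collinear (and $|H|>2$) then $|\cL|\ge 2q-1-|H|/2\ge \tfrac12(3q-\sqrt q-3)>q+\sqrt q+1$ for $q$ large, a contradiction. Hence all holes lie on a single line $m$, and since a line meets $\Pi'$ in at most $\sqrt q+1$ points, this already bounds $|H|\le\sqrt q+1$. I would then finish the collinear case by passing to the affine plane $\Pi_q\setminus m$: the lines of $\cL$ other than $m$ must cover all $q^2$ affine points while avoiding every hole on $m$, and a short pencil/parallel-class analysis (using that no line of $\cL$ meets $H$) shows this is impossible with at most $q+\sqrt q+1$ lines when $|H|\le\sqrt q+1\ll q$.

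The main obstacle is twofold. For arbitrary (possibly non-Desarguesian) planes Result~\ref{holes} is unavailable, so the reduction to collinear holes must be replaced by a self-contained lower bound on partial covers whose holes are confined to a Baer subplane; a natural substitute is a second-order (variance) count of the type used for Proposition~\ref{josagvan}, combining the incidence identities above with $\sum_{Q}\binom{d(Q)}{2}=\binom{n}{2}-\sum_{R\in\Pi'}\binom{d(R)}{2}$ to show that the required coverage cannot coexist with $d(P)=0$. The delicate boundary case---when the holes are collinear, lying on a single secant of $\Pi'$ through $P$---is exactly where the first-order counting becomes tight, so I expect the genuine difficulty to lie in ruling out these clustered, nearly extremal configurations and in controlling the dependence on the (unknown) fine structure of $\Pi'$ in a general plane.
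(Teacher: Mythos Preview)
The statement you are attempting to prove is labelled \textbf{Conjecture} in the paper; there is no proof of it in the paper to compare against. The authors explicitly leave it open for arbitrary projective planes, remarking only that in the Desarguesian case it follows from their Theorem~\ref{fo2} (the first item there: if $|\cD|\le\frac52 q-\frac32$ and $\cD$ is non-primal then $\LD$ is a covering set; here $\Pi'\cup\cL$ is a non-primal dominating set of size at most $2q+2\sqrt q+2<\frac52 q$). So the only meaningful comparison is in $\PG(2,q)$.

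Your first-order double count is correct and yields the pleasant structural fact $a\ge q-1$, but you do not actually use it afterwards; the partial-cover argument via Result~\ref{holes} is independent of it. In $\PG(2,q)$ your reduction to collinear holes via Result~\ref{holes} is valid. For the collinear case, however, ``a short pencil/parallel-class analysis'' is too vague: the clean way to finish is to observe that, dually, $\cL^*\cup\{m^*\}$ is a blocking set of $\PG(2,q)$ of size at most $q+\sqrt q+2$ with $m^*$ essential, whence by Result~\ref{tangents} there are at least $2q-|\cL|\ge q-\sqrt q-1$ tangents at $m^*$; these tangents correspond to holes, contradicting $|H|\le\sqrt q+1$. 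This is a different (more direct) route than the paper's, which deduces the Desarguesian case as a corollary of the full Theorem~\ref{fo2} and hence of Proposition~\ref{pg2q}; both ultimately rest on the Jamison/Brouwer--Schrijver bound and the Blokhuis--Brouwer--Sz\H onyi partial-cover result.

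For arbitrary planes your proposal does not constitute a proof, and you say so yourself. Neither Result~\ref{holes} nor Result~\ref{tangents} is available, and the suggested variance count $\sum_Q\binom{d(Q)}{2}=\binom{n}{2}-\sum_{R\in\Pi'}\binom{d(R)}{2}$ runs into exactly the obstruction you identify: when the holes cluster on a single secant through $P$, the first-order inequalities become tight and the second-order count alone does not exclude the configuration. This is precisely why the statement is posed as a conjecture; your outline does not close that gap.
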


In the above setting, if $\cL$ is not a full pencil, then $\Pi'\cup\cL$ is a non-primal dominating set of size at most $2q+2\sqrt{q}+2$, hence it reaches equality in Theorem \ref{main} (vi), so the conjectured characterization would imply Conjecture \ref{conj}. Note that if the plane is Desarguesian, then Conjecture \ref{conj} is already implied by Theorem \ref{fo2}; moreover, with the help of stability results on blocking sets, one can prove a much stronger assertion.

From a constructive point of view, if one considers a set $\mathcal{P}$ of points and the set $\mathcal{L}$ of lines skew to $\mathcal{P}$, then adds every point to $\mathcal{P}$ which is not covered by $\mathcal{L}$, the obtained structure will be a minimal dominating set. In this way all dominating sets can be constructed whose points and lines are pairwise not incident. In another terminology, these are the $1$-dominating independent sets of the incidence graph, see \cite{GH, Nagy}. Note that avoiding incidences is not necessary in general.

We have seen a strong connection between minimal dominating sets and minimal blocking sets in $\Pi_q$. The size of minimal blocking sets in $\Pi_q$ have a well known upper bound, namely it exceed the size $q^{3/2}+1$ of a
unital \cite{BT}. Hence the question naturally rises whether similar upper bounds exist concerning minimal dominating sets. Dominating independent sets provide a wide range of examples with size close to $q^2$.

The presented stability result, Theorem \ref{main} can be viewed as a special case of the investigation of domination properties in combinatorial designs. This question was recently raised by Goldberg et al.\ in \cite{Goldberg} who proved several bounds on the domination number of designs. As dominating sets and blocking sets are somewhat similar and related concepts, let us mention that the study of blocking sets in combinatorial designs started much earlier, see e.g.\ the paper of Bruen and Thas \cite{BT}. 

In projective planes, $t$-fold blocking sets play an important role as well. On the other hand, multiple domination also has a broad literature (see e.g. \cite{fink, survey1}) which yields the following problem. 

\begin{problem} 
Describe small minimal $t$-dominating sets in arbitrary or in Desarguesian projective planes.
\end{problem}

Once small minimal dominating sets are classified, a natural extremal question would be to study the following 

\begin{problem} 
Determine the order of magnitude of the number of dominating sets, independent dominating sets, minimal dominating sets in projective planes.
\end{problem}

\end{document}